\documentclass[11pt]{amsart}
\usepackage[utf8]{inputenc}
\usepackage{amsmath}
\usepackage{amssymb}
\usepackage{amsfonts}
\usepackage{nicefrac}
\usepackage{amsthm}
\usepackage{mathrsfs}
\usepackage{mathtools}
\usepackage{subfig}
\usepackage{tikz}
\usetikzlibrary{patterns, snakes}
\usepackage{float}
\usepackage{enumerate}
\usepackage{graphicx}
\usepackage{comment}
\newcommand{\N}{\mathbb{N}}

\newcommand{\p}{\mathcal{P}}
\newcommand{\LG}{\operatorname{lg}}
\newcommand{\s}{\mathcal{S}}
\newcommand{\f}{\mathcal{F}}
\newcommand{\Ll}{\mathscr{L}}
\newcommand{\SCP}{sequentially congruent partition}
\newcommand{\tail}{\operatorname{Tail}}
\newcommand{\lcm}{\operatorname{lcm}}

\theoremstyle{plain}
\makeatletter
\newtheorem*{rep@theorem}{\rep@title}
\newcommand{\newreptheorem}[2]{%
\newenvironment{rep#1}[1]{%
 \def\rep@title{#2 \ref{##1}}%
 \begin{rep@theorem}}%
 {\end{rep@theorem}}}
\makeatother

\newtheorem{theorem}{Theorem}[section]
\newtheorem{corollary}[theorem]{Corollary}
\newtheorem{proposition}[theorem]{Proposition}
\newtheorem*{proposition*}{Proposition}
\newtheorem{lemma}[theorem]{Lemma}
\newtheorem*{lemma*}{Lemma}
\newreptheorem{theorem}{Theorem}
\newreptheorem{corollary}{Corollary}
\newreptheorem{proposition}{Proposition}
\newreptheorem{lemma}{Lemma}

\theoremstyle{definition}
\newtheorem{definition}[theorem]{Definition}
\newtheorem{remark}[theorem]{Remark}
\newtheorem{example}[theorem]{Example}

\usepackage{graphicx}
\usepackage{young}
\usepackage{ytableau}

\addtolength{\oddsidemargin}{-.825in}
\addtolength{\evensidemargin}{-.825in}
\addtolength{\textwidth}{1.7in}

\addtolength{\topmargin}{-.75in}
\addtolength{\textheight}{1in}

\begin{document}

\title{Bijections, generalizations, and other properties of sequentially congruent partitions}

\author{Ezekiel Cochran}
\address{LeTourneau University\newline
Longview, TX, 75602, U.S.A.}
\email{ezekielcochran@letu.edu}

\author{Madeline Locus Dawsey}
\address{Department of Mathematics\newline
University of Texas at Tyler\newline
Tyler, TX 75799, U.S.A.}
\email{mdawsey@uttyler.edu}

\author{Emma Harrell}
\address{Mount Holyoke College\newline
South Hadley, MA, 01075,  U.S.A.}
\email{harre22e@mtholyoke.edu}

\author{Samuel Saunders}
\address{University of Texas at Tyler\newline
Tyler, TX 75799, U.S.A.}
\email{ssaunders@uttyler.edu}

\subjclass[2010]{05A17, 11P84}

\keywords{Partition, Young diagram, Partition bijection, Partition ideal}

\date{}

\begin{abstract}
Recently, Schneider and Schneider defined a new class of partitions called sequentially congruent partitions, in which each part is congruent to the next part modulo its index, and they proved two partition bijections involving these partitions.  We introduce a new partition notation specific to sequentially congruent partitions which allows us to more easily study these bijections and their compositions, and we reinterpret them in terms of Young diagram transformations.  We also define a generalization of sequentially congruent partitions, and we provide several new partition bijections for these generalized sequentially congruent partitions.  Finally, we investigate a question of Schneider--Schneider regarding how sequentially congruent partitions fit into Andrews' theory of partition ideals.  We prove that the maximal partition ideal of sequentially congruent partitions has infinite order and is therefore not linked, and we identify its order 1 subideals.
\end{abstract}

\maketitle

\section{Introduction}\label{section_intro}

A partition $\lambda$ of a nonnegative integer $n$ is a sequence $(\lambda_1,\lambda_2,\dots,\lambda_r)$ of positive integers, called the parts of $\lambda$, such that $\sum_{i=1}^r\lambda_i=n$ and $\lambda_1\geq\lambda_2\geq\dots\geq\lambda_r$. Sometimes it is useful to refer to a partition $\lambda$ by its frequency notation $\lambda=\langle 1^{f_1}, 2^{f_2}, 3^{f_3}, \dots \rangle$, where the frequency $f_i$ (also called the multiplicity) is the number of times the part $i$ appears in $\lambda$. Throughout this paper, frequency notation does not include parts with frequency zero, unless we explicitly say otherwise.  The size of a partition $\lambda$ of $n$ is $|\lambda|=n$, and the length of $\lambda$, denoted $\ell(\lambda)$, is the number of parts that comprise $\lambda$.

The Young diagram of a partition $\lambda$ is an array of $|\lambda|$ nodes (or boxes) with $\ell(\lambda)$ rows, where the $i$th row has $\lambda_i$ nodes. The conjugate of a partition is obtained by reflecting the Young diagram over its main (upper left to lower right) diagonal. For example, the partition $(7,5,5,4,1)$ has the Young diagram below, and its conjugate is shown to the right.

\begin{figure}[H]
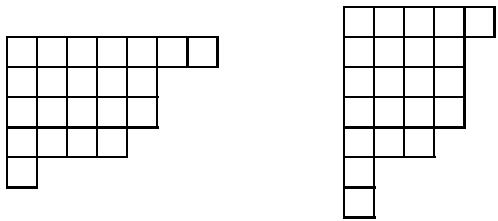
%
\centering
\subfloat{{\ytableausetup
{mathmode, boxsize=1em, centertableaux}
\begin{ytableau}
~ & & & & & &\\
  & & & &\\
  & & & &\\
  & & &\\
  \\
\end{ytableau}}}%
\qquad\qquad
\subfloat{{\ytableausetup
{mathmode, boxsize=1em, centertableaux}
\begin{ytableau}
~ & & & &\\
  & & &\\
  & & &\\
  & & &\\
  & &\\
  \\
  \\
\end{ytableau}}}%
\caption{Young diagrams of $\lambda=(7,5,5,4,1)$ and its conjugate}
\label{conjugate_figure}
\end{figure}

Counting the number of partitions of a certain size and type is a question that has intrigued mathematicians for centuries. A partition bijection is a bijection between two sets of partitions, which implies that the two sets contain the same number of partitions. The first famous partition bijection was proved by Euler in the 1700s: the number of partitions of a natural number $n$ into odd parts is equal to the number of partitions of $n$ into distinct parts. In the late 19th and early 20th century, Rogers and Ramanujan discovered the following two q-series identities, now known as the Rogers--Ramanujan identities, which lead to two more sophisticated partition bijections:
\begin{align*}
    \sum_{n=0}^\infty \frac{q^{n^2}}{(1-q)(1-q^2)\cdots(1-q^n)}=\prod_{n=1}^\infty \frac{1}{(1-q^{5n-1})(1-q^{5n-4})},\\
    \sum_{n=0}^\infty\frac{q^{n^2+n}}{(1-q)(1-q^2)\cdots(1-q^n)}=\prod_{n=1}^\infty \frac{1}{(1-q^{5n-2})(1-q^{5n-3})}.
\end{align*}
The first identity implies that the number of partitions of a natural number $n$ in which adjacent parts differ by at least 2 equals the number of partitions of $n$ into parts congruent to 1 or 4 modulo 5.  The second identity implies that the number of partitions of $n$ in which adjacent parts differ by at least 2 and whose smallest part is at least 2 is the same as the number of partitions of $n$ into parts congruent to 2 or 3 modulo 5.  Partitions in which adjacent parts differ by at least 2, as mentioned in the bijection implied by the first identity, are sometimes called Rogers--Ramanujan partitions. By rearranging the Young diagrams, it is straightforward to see that Rogers--Ramanujan partitions are also in bijection with partitions with no parts below the Durfee square, which is the largest square that fits in the top left corner of a Young diagram. For example, the partition $\lambda=(7,6,3,3,1)$ shown below has a $3\times3$ Durfee square. 

\begin{figure}[H]
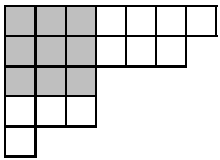

\centering
\ytableausetup
{mathmode, boxsize=1em, centertableaux}
\begin{ytableau}
*(lightgray) & *(lightgray) & *(lightgray) & 
&  & & \\
*(lightgray) & *(lightgray) & *(lightgray) & 
&  &  \\
*(lightgray) & *(lightgray) & *(lightgray) \\
& &\\
\\
\end{ytableau}
\caption{Durfee square of $\lambda=(7,6,3,3,1)$}
\end{figure}
\noindent A partition with no parts below its Durfee square can also be thought of as a partition whose smallest part is greater than or equal to the length of the partition.

Recently, Schneider and Schneider \cite{schneider1} defined a new type of partition.
\begin{definition}
A \emph{sequentially congruent partition} $\lambda = (\lambda_1, \dots, \lambda_r)$ is a partition such that
\begin{enumerate}
    \item $\lambda_i \equiv  \lambda_{i + 1} \pmod i$ for all $1 \leq i < r$, and
    \item $\lambda_r \equiv 0 \pmod r$.
\end{enumerate}
\end{definition}
\noindent We denote the set of sequentially congruent partitions by $\s$.  An example of a sequentially congruent partition is $(21,16,14,8)$, since $1\vert (21-16)$,  $2\vert (16-14)$, $3\vert (14-8)$, and $4\vert 8$. Note that the conjugate of a sequentially congruent partition is of the form $\left<1^{f_1},2^{f_2}, 3^{f_3},\dots\right>$, where $f_i\equiv 0 \pmod i$. That is to say, the part $i$ occurs in the partition a multiple of $i$ times. Schneider and Schneider refer to the conjugates of sequentially congruent partitions as \emph{frequency congruent partitions} \cite{schneider1}.

In the two papers so far on sequentially congruent partitions \cite{schneider1,schneider2}, several partition bijections between the set of sequentially congruent partitions and other sets of partitions have been found, many of which use the set of frequency congruent partitions as an intermediary. Some open questions arise in these papers, such as a description of the composition of these bijections, how to define analogous bijections with generalized sequentially congruent partitions, and the connection, if it exists, between sequentially congruent partitions and partition ideals as defined by Andrews \cite{andrews1974} (for a more concise treatment of partition ideals, see \cite[Chapter 8]{andrews}). We answer these questions in this work.

First, we introduce some notation that will simplify our study of sequentially congruent partitions.  For a partition $\pi$, we consider $\pi_i = 0$ for all $i > \ell (\pi)$. We define an operation $\star$ such that for any two partitions $\lambda$ and $\gamma$, we have that $\lambda \star \gamma := \mu$, where $\mu = (\mu_1, \mu_2, \dots)$ is the partition such that $\mu_i = \lambda_i + \gamma_i$ for all $i \in \N$. In other words, this operation ``adds" two partitions, componentwise from the left. For example, $(5, 3, 2, 2) \star (3, 2, 1) = (8, 5, 3, 2)$. Note that $ |\lambda \star \gamma| =|\lambda| + |\gamma|$ and $\ell(\lambda\star\gamma)=\max\{\ell(\lambda),\ell(\gamma)\}$. We also define $c(\lambda_1,\dots,\lambda_r)=(c\lambda_1,\dots,c\lambda_r)$, so that $c\lambda=\underbrace{\lambda\star\lambda\star\dots\star\lambda}_{\text{$c$ times}}$.
Using $\star$, we observe a new, simple way to write a sequentially congruent partition.

\begin{theorem}
\label{c_notation_theorem}
A partition $\lambda$ is sequentially congruent if and only if it can be written uniquely in the form
\begin{align*}
    \lambda &= c_1(1) \star c_2(2,2) \star c_3(3,3,3) \star \cdots
\end{align*}
for some nonnegative integers $c_1, c_2, c_3, \dots$, where finitely many $c_i$ are nonzero.
\end{theorem}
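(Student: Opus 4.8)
The plan is to reduce the entire statement to a single telescoping identity relating the coefficients $c_k$ to consecutive parts of $\lambda$, which simultaneously yields existence, the sequentially congruent conditions, and uniqueness. First I would unpack the proposed form. Since $c_i(i,i,\dots,i)$ is the partition with $i$ parts each equal to $ic_i$, its $k$-th component equals $ic_i$ when $k \leq i$ and $0$ otherwise. Because only finitely many $c_i$ are nonzero, the $\star$-product is a finite componentwise sum, so
\begin{align*}
\lambda_k = \sum_{i \geq k} i c_i.
\end{align*}
Subtracting consecutive terms gives the key relation $\lambda_k - \lambda_{k+1} = k c_k$, which drives everything that follows.

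Uniqueness is then immediate: the relation forces $c_k = (\lambda_k - \lambda_{k+1})/k$, so a partition determines its coefficients. For the reverse implication I would start from arbitrary nonnegative integers $c_i$ with finite support and verify that $\lambda = c_1(1) \star c_2(2,2) \star \cdots$ is a sequentially congruent partition. Weak decreasing holds since $\lambda_k - \lambda_{k+1} = k c_k \geq 0$; setting $R = \max\{i : c_i \neq 0\}$, the formula shows $\lambda_k > 0$ precisely for $k \leq R$, so $\ell(\lambda) = R$. Then $\lambda_k - \lambda_{k+1} = k c_k$ gives $\lambda_k \equiv \lambda_{k+1} \pmod{k}$ for $k < R$, while $\lambda_R = R c_R$ gives $\lambda_R \equiv 0 \pmod{R}$, establishing both defining conditions.

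For the forward implication I would take a sequentially congruent partition $\lambda = (\lambda_1, \dots, \lambda_r)$, adopt the convention $\lambda_{r+1} = 0$, and define $c_k = (\lambda_k - \lambda_{k+1})/k$ for $1 \leq k \leq r$ with $c_k = 0$ otherwise. Condition (1) of the definition supplies $k \mid (\lambda_k - \lambda_{k+1})$ for $k < r$ and condition (2) handles $k = r$, so each $c_k$ is an integer; nonnegativity follows from $\lambda_k \geq \lambda_{k+1}$. Reconstructing through $\sum_{i \geq k} i c_i$ telescopes back to $\lambda_k$, giving the decomposition.

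I expect no serious obstacle: the crux is simply recognizing the telescoping identity $\lambda_k - \lambda_{k+1} = k c_k$, after which each claim is read off directly from the two congruence conditions. The only point demanding care is the edge behavior at the final part—consistently using $\lambda_{r+1} = 0$ and confirming that the reconstructed partition has length equal to the largest index of a nonzero coefficient.
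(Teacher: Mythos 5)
Your proposal is correct and follows essentially the same route as the paper: both arguments hinge on the telescoping identity $\lambda_k - \lambda_{k+1} = k c_k$, verifying the congruence conditions from it in one direction and solving for the $c_k$ in the other. The only cosmetic difference is that you read uniqueness off directly from the explicit formula $c_k = (\lambda_k - \lambda_{k+1})/k$, whereas the paper establishes it by downward induction comparing two representations; the content is the same, and yours is slightly more streamlined.
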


\begin{remark}
A sequentially congruent partition $\lambda=c_1(1)\star c_2(2,2)\star c_3(3,3,3)\star\cdots$ can be written in standard notation as $\lambda=(c_1+2c_2+3c_3+\dots+r c_r, 2c_2+3c_3+\dots+r c_r,\dots,r c_r)$, where $r$ is the largest integer so that $c_r$ is nonzero.
\end{remark}

Theorem \ref{c_notation_theorem} leads to a new partition notation that is surprisingly natural for sequentially congruent partitions and simplifies the results and proofs in this paper. This notation, which we call \emph{$c$-notation}, will be discussed in more detail in Section \ref{section_c-notation}. 

Schneider, Sellers, and Wagner \cite{schneider2} prove a bijection between sequentially congruent partitions and partitions whose parts are squares. This bijection also follows easily from Theorem \ref{c_notation_theorem}.

\begin{corollary}
\label{bijection_into_squares_corollary}
The number of sequentially congruent partitions of size $n$ equals the number of partitions of $n$ whose parts are perfect squares.
\end{corollary}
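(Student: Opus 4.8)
The plan is to read off an explicit size-preserving bijection directly from the $c$-notation of Theorem~\ref{c_notation_theorem}. The key observation is a size computation: the building block $c_k(k,k,\dots,k)$ appearing in the decomposition is the partition with $k$ parts, each equal to $c_k k$, and therefore has size $c_k k \cdot k = c_k k^2$. Since $|\lambda \star \gamma| = |\lambda| + |\gamma|$, summing over $k$ shows that a sequentially congruent partition $\lambda = c_1(1) \star c_2(2,2) \star c_3(3,3,3) \star \cdots$ has size
\[
|\lambda| = \sum_{k \geq 1} c_k k^2.
\]

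First I would define a map $\Phi$ from $\s$ to the set of partitions all of whose parts are perfect squares. By Theorem~\ref{c_notation_theorem}, every $\lambda \in \s$ is encoded uniquely by a sequence $(c_1, c_2, c_3, \dots)$ of nonnegative integers with only finitely many nonzero entries. I would send $\lambda$ to the partition $\Phi(\lambda)$ that contains exactly $c_k$ copies of the perfect square $k^2$ for each $k \geq 1$, i.e.\ $\Phi(\lambda) = \langle 1^{c_1}, 4^{c_2}, 9^{c_3}, \dots \rangle$ in frequency notation. By construction every part of $\Phi(\lambda)$ is a perfect square, so $\Phi$ lands in the target set.

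To see that $\Phi$ is a bijection, I would note that a partition whose parts are all perfect squares is itself determined uniquely by how many times each square $k^2$ occurs, that is, by a sequence $(c_1, c_2, c_3, \dots)$ of nonnegative integers with finite support. Thus $\Phi$ is precisely the composite of two bijections sharing the common indexing set of such sequences: the parametrization of $\s$ furnished by Theorem~\ref{c_notation_theorem} on one side, and the multiplicity encoding of partitions into squares on the other. Finally, size-preservation follows from the display above, since the partition with $c_k$ copies of $k^2$ also has size $\sum_{k \geq 1} c_k k^2 = |\lambda|$. Restricting $\Phi$ to partitions of a fixed $n$ therefore yields the claimed equality of counts.

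There is essentially no hard step here; the entire content is the size bookkeeping, which is why the corollary follows \emph{easily} from Theorem~\ref{c_notation_theorem}. The only point requiring care is to correctly identify the size of each block $c_k(k,\dots,k)$ as $c_k k^2$ rather than, say, $c_k k$, and to recognize that both families of partitions are parametrized by the identical set of finitely supported nonnegative integer sequences $(c_k)_{k \geq 1}$. Once this is observed, the bijection and the equality of counting functions are immediate.
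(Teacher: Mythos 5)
Your proposal is correct and matches the paper's own proof essentially verbatim: both use the unique $c$-notation parametrization from Theorem \ref{c_notation_theorem} to send $[c_1,\dots,c_r]$ to the partition with $c_k$ copies of $k^2$, and both verify size-preservation via the identity $|\lambda|=\sum_k c_k k^2$. Nothing is missing.
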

For a set $Q$ of partitions, let $p(Q,n)$ be the number of partitions in $Q$ of size $n$.  It is known that partitions into squares, and thus sequentially congruent partitions, have the generating function
\begin{equation}\label{generating_function_squares}
\sum_{n = 0}^{\infty} p(\s, n)q^n = \prod_{n=1}^\infty \frac{1}{1-q^{n^2}}.
\end{equation}

Sums of squares have been studied for centuries, and although \eqref{generating_function_squares} does not have nice modular transformation properties like many other generating functions related to partitions, asymptotics are known.  For example, see \cite{B,G,HR,H,HS,L} for background and more details on partitions into squares.

In \cite{schneider1}, Schneider and Schneider define the bijections $\pi$ and $\sigma$ between $\p$, the set of all partitions, and $\s$. The bijection $\pi:\p \to\s$ maps partitions of size $n$ to sequentially congruent partitions with largest part $n$. For a partition $\lambda=(\lambda_1,\lambda_2,\dots,\lambda_r)\in \p$ of size $n$, they define $\pi(\lambda)=\lambda'$, where $\lambda' = (\lambda_1',\lambda_2',\dots,\lambda_r')$ is the partition with
\begin{equation}\label{pi}
\lambda_i'=i\lambda_i + \sum_{j=i+1}^r \lambda_j
\end{equation}
for all $1\leq i\leq r$.  The bijection $\sigma:\s \to \p$ maps sequentially congruent partitions with largest part $n$ to partitions of size $n$. For a sequentially congruent partition $\phi=(\phi_1,\phi_2,\dots,\phi_r)$, they define
\begin{equation}\label{sigma}
\sigma(\phi)= \big< 1^{\phi_1-\phi_2},2^{(\phi_2-\phi_3)/2},3^{(\phi_3-\phi_4)/3},\dots, r^{\phi_r/r} \big>.
\end{equation}
We now give examples of the maps $\pi$ and $\sigma$. 
For the partition $(12,8,4,3,3)\in \p$ of size $30$, we have
\begin{align*}
    &\pi((12,8,4,3,3))=(30,26,18,15,15)\in \s\\
\shortintertext{and}\\ 
&\sigma((30,26,18,15,15))=\langle 1^4, 2^4, 3^1, 5^3 \rangle\in \p.
\end{align*}
Schneider and Schneider find that the composition $\sigma\circ\pi:\p\rightarrow\p$ is equivalent to conjugation \cite{schneider1}.  Based on their result, the following proposition becomes apparent.  While conjugation is traditionally defined in terms of Young diagrams reflected about the main diagonal, the composition $\sigma \circ \pi$ gives an alternate definition of conjugation in terms of the parts of partitions.

\begin{proposition}
\label{conjugate_closed_form_proposition}
The conjugate of any partition $\lambda = (\lambda_1,\lambda_2,\dots,\lambda_r)$ is the partition \[\big\langle 1^{\lambda_1-\lambda_2},2^{\lambda_2-\lambda_3},\dots ,(r-1)^{\lambda_{r-1}-\lambda_r},r^{\lambda_r}\big\rangle.\]
\end{proposition}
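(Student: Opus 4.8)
The plan is to exploit the fact, due to Schneider and Schneider and recalled in the paragraph preceding the proposition, that the composition $\sigma \circ \pi : \p \to \p$ coincides with conjugation. Since that identity is already available, it suffices to compute $\sigma(\pi(\lambda))$ explicitly in frequency notation and to observe that the answer matches the claimed formula.

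First I would set $\phi = \pi(\lambda)$, so that by \eqref{pi} we have $\phi_i = i\lambda_i + \sum_{j=i+1}^r \lambda_j$ for each $1 \leq i \leq r$. The key computation is the consecutive difference $\phi_i - \phi_{i+1}$: upon subtracting, the tail sums $\sum_{j=i+2}^r \lambda_j$ cancel, the leftover $\lambda_{i+1}$ from the first expression combines with the $-(i+1)\lambda_{i+1}$ term, and everything collapses to $\phi_i - \phi_{i+1} = i(\lambda_i - \lambda_{i+1})$. This telescoping is the heart of the argument and the only place where any real manipulation occurs. For the final index I would separately note that $\phi_r = r\lambda_r$, so $\phi_r/r = \lambda_r$.

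Next I would substitute these differences into the definition \eqref{sigma} of $\sigma$. The exponent attached to the part $i$ in $\sigma(\phi)$ is $(\phi_i - \phi_{i+1})/i$, which by the previous step equals exactly $\lambda_i - \lambda_{i+1}$ for $1 \leq i \leq r-1$, while the exponent of the part $r$ is $\phi_r/r = \lambda_r$. Hence $\sigma(\pi(\lambda)) = \langle 1^{\lambda_1 - \lambda_2}, 2^{\lambda_2 - \lambda_3}, \dots, (r-1)^{\lambda_{r-1} - \lambda_r}, r^{\lambda_r}\rangle$, and since $\sigma \circ \pi$ is conjugation, this partition is the conjugate of $\lambda$, which is precisely the claimed expression.

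I do not expect a genuine obstacle here, as the proof reduces to a one-line telescoping identity dressed in the $\pi$ and $\sigma$ notation. If one preferred a self-contained argument that does not invoke the Schneider--Schneider result, the alternative would be the standard column-counting fact: the number of columns of height at least $k$ in the Young diagram of $\lambda$ is $\lambda_k$, so the number of columns of height exactly $k$, which is the multiplicity of the part $k$ in the conjugate, is $\lambda_k - \lambda_{k+1}$ under the convention $\lambda_{r+1} = 0$. Either route delivers the same frequency notation.
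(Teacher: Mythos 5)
Your proposal is correct and follows exactly the route the paper intends: the paper states the proposition is ``immediate from the definitions of $\pi$ and $\sigma$'' given the Schneider--Schneider result that $\sigma\circ\pi$ is conjugation, and your telescoping computation $\phi_i - \phi_{i+1} = i(\lambda_i - \lambda_{i+1})$ together with $\phi_r = r\lambda_r$ is precisely the detail being left implicit there. Your alternative column-counting argument is also valid, but the main argument matches the paper's approach.
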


\begin{corollary}
\label{self_conjugate_corollary}
A partition $\lambda = (\lambda_1,\lambda_2,\dots,\lambda_r)$ is self-conjugate if and only if \[\lambda_i= \begin{cases}
r & \; \; \text{ when }\;i\leq \lambda_r,\\
r-1 & \; \; \text{ when }\; \lambda_r < i \leq \lambda_{r-1},\\
\vdots&\\
2 & \; \; \text{ when }\; \lambda_3<i\leq \lambda_2,\\
1 & \; \; \text{ when }\; \lambda_2<i\leq \lambda_1.
\end{cases}\]
\end{corollary}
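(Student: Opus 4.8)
The plan is to apply Proposition \ref{conjugate_closed_form_proposition} directly and to translate the resulting frequency notation for the conjugate back into standard weakly decreasing part notation. First I would recall that a partition is \emph{self-conjugate} precisely when it equals its own conjugate. By Proposition \ref{conjugate_closed_form_proposition}, the conjugate of $\lambda=(\lambda_1,\dots,\lambda_r)$ is
\[
\big\langle 1^{\lambda_1-\lambda_2},2^{\lambda_2-\lambda_3},\dots,(r-1)^{\lambda_{r-1}-\lambda_r},r^{\lambda_r}\big\rangle,
\]
so the crux of the argument is to read off the $i$th part of this partition when its parts are listed in weakly decreasing order.

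Next I would carry out that translation by tracking cumulative frequencies. Adopting the convention $\lambda_{r+1}=0$, the largest part of the conjugate is $r$, occurring $\lambda_r$ times and thus filling the index positions $1$ through $\lambda_r$; the part $r-1$ occurs $\lambda_{r-1}-\lambda_r$ times, filling positions $\lambda_r+1$ through $\lambda_{r-1}$; and in general the part $k$ occupies positions $\lambda_{k+1}+1$ through $\lambda_k$. Hence the $i$th part of the conjugate equals $k$ exactly when $\lambda_{k+1}<i\leq\lambda_k$, which is precisely the piecewise description appearing in the statement (with $k$ ranging from $r$ down to $1$). In particular the conjugate has $\lambda_1$ parts.

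Finally I would impose self-conjugacy by requiring $\lambda_i$ to equal the $i$th part of its conjugate for every $i$, which yields exactly the stated system of cases and establishes both directions simultaneously, since each step is an equivalence. The only point requiring care is the bookkeeping of cumulative frequencies, namely verifying that the partial sums $\lambda_r,\ \lambda_r+(\lambda_{r-1}-\lambda_r),\ \dots$ telescope to the index bounds $\lambda_{k+1}<i\leq\lambda_k$; this telescoping is routine and I do not expect it to present any genuine obstacle. The result is therefore an immediate unpacking of Proposition \ref{conjugate_closed_form_proposition}.
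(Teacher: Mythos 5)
Your proposal is correct and takes essentially the same route as the paper, which simply declares the corollary immediate from Proposition \ref{conjugate_closed_form_proposition} (equivalently, from the definitions of $\pi$ and $\sigma$). Your cumulative-frequency bookkeeping, showing that part $k$ of the conjugate occupies positions $\lambda_{k+1}+1$ through $\lambda_k$, just makes explicit the unpacking that the paper leaves to the reader.
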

\noindent The proofs of Proposition \ref{conjugate_closed_form_proposition} and Corollary \ref{self_conjugate_corollary} are immediate from the definitions of $\pi$ and $\sigma$.

Schneider and Schneider also briefly investigate the composition $\pi \circ \sigma$ \cite{schneider1} and observe that $\pi\circ \sigma$ is not equivalent to conjugation. Instead, they suggest that this composition is some analogue of conjugation specific to sequentially congruent partitions.  We identify this analogue in the following theorem.

\begin{theorem}
\label{pi_composed_sigma_theorem}
Let $\phi=(\phi_1, \phi_2, \dots, \phi_r)$ be a sequentially congruent partition. Then we have that $$(\pi\circ\sigma)(\phi)=\left\langle \left( \sum_{j=1}^{1} \sum_{i = j}^{r} \frac{\phi_i - \phi_{i + 1}}{i} \right)^{\phi_1 - \phi_2}, \left( \sum_{j=1}^{2} \sum_{i = j}^{r} \frac{\phi_i - \phi_{i + 1}}{i} \right)^{\frac{\phi_2 - \phi_3}{2}}, \dots, \left( \sum_{j=1}^{r} \sum_{i = j}^{r} \frac{\phi_i - \phi_{i + 1}}{i} \right)^{\frac{\phi_r}{r}} \right\rangle.$$
\end{theorem}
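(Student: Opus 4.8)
The plan is to compute $(\pi\circ\sigma)(\phi)$ directly, by first writing $\sigma(\phi)$ in standard (weakly decreasing) form and then applying the defining formula \eqref{pi} for $\pi$ to each of its parts. Adopt the convention $\phi_{r+1}=0$ and set $f_i:=(\phi_i-\phi_{i+1})/i$ for $1\le i\le r$, so that by \eqref{sigma} the partition $\nu:=\sigma(\phi)$ is the one in which the part $i$ occurs exactly $f_i$ times. Listing $\nu$ in weakly decreasing order, the part $i$ occupies the positions $N_{i+1}+1,\dots,N_i$, where $N_k:=\sum_{m=k}^r f_m$ counts the parts of $\nu$ that are at least $k$ (so $N_{r+1}=0$ and $N_1=\ell(\nu)$). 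Since $\phi_r\equiv 0\pmod r$ forces $f_r\ge 1$, we have $N_k\ge 1$ for every $1\le k\le r$.

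First I would compute $\pi(\nu)_p$ for a position $p$ carrying the part $k$, i.e.\ $N_{k+1}<p\le N_k$. By \eqref{pi},
\begin{equation*}
\pi(\nu)_p = p\,\nu_p + \sum_{q>p}\nu_q = pk + (N_k-p)k + \sum_{i=1}^{k-1} i f_i = kN_k + \sum_{i=1}^{k-1} i f_i,
\end{equation*}
where the term $(N_k-p)k$ accounts for the remaining copies of $k$ lying after position $p$, and $\sum_{i=1}^{k-1} if_i$ accounts for all the smaller parts. The crucial observation is that the variable position $p$ cancels, so this common value, call it $W_k:=kN_k+\sum_{i=1}^{k-1} if_i$, is shared by all $f_k=N_k-N_{k+1}$ positions carrying the part $k$. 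Hence $\pi(\nu)$ takes the value $W_k$ with multiplicity $f_k$, giving $(\pi\circ\sigma)(\phi)=\langle W_1^{f_1},\dots,W_r^{f_r}\rangle$ in frequency notation.

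It then remains to identify $W_k$ with the claimed base $V_k:=\sum_{j=1}^k\sum_{i=j}^r f_i=\sum_{j=1}^k N_j$. Using $N_k=f_k+N_{k+1}$ I would rewrite $W_k=\sum_{i=1}^k if_i + kN_{k+1}$, and then evaluate $\sum_{j=1}^k N_j$ by interchanging the order of summation: the coefficient of $f_i$ is the number of indices $j$ with $j\le k$ and $j\le i$, namely $\min(i,k)$, which gives $\sum_{i=1}^k if_i + k\sum_{i=k+1}^r f_i = \sum_{i=1}^k if_i + kN_{k+1}$. The two expressions agree, so $W_k=V_k$, and substituting $f_i=(\phi_i-\phi_{i+1})/i$ yields exactly the stated formula.

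I expect the only real subtlety to be the bookkeeping: correctly describing the block structure of $\sigma(\phi)$ in standard form (which consecutive positions hold which parts) and tracking the tail sum $\sum_{q>p}\nu_q$. Once the position $p$ is seen to cancel, the grouping into frequency notation is forced and the remaining work is the routine double-sum interchange. It is also worth recording that $N_k\ge f_r\ge 1$ makes the $V_k$ strictly increasing, so the right-hand side is unambiguous frequency notation (with the usual convention that any factor whose exponent $f_k$ vanishes is simply omitted).
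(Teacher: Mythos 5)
Your proof is correct, and it takes a genuinely different route from the paper's. The paper's proof runs entirely through the $c$-notation machinery: it writes $\phi=[c_1,\dots,c_r]$, applies Proposition \ref{c_sigma_prop} to get $\sigma(\phi)=\langle 1^{c_1},\dots,r^{c_r}\rangle$, then applies Proposition \ref{pi_c_representation_proposition} to read off $\pi(\sigma(\phi))$ in $c$-notation as a string of blocks $[\,0,\dots,0,1\,]$ of lengths $c_r,c_{r-1},\dots,c_1$, and finally unpacks that $c$-notation into the claimed frequency notation. You instead bypass $c$-notation altogether and compute from the raw definitions \eqref{pi} and \eqref{sigma}: you lay out $\sigma(\phi)$ in standard form with block positions $N_k$, observe that in $\pi(\nu)_p = p\nu_p+\sum_{q>p}\nu_q$ the position $p$ cancels within each block (this cancellation is exactly what Proposition \ref{pi_c_representation_proposition} encodes, since equal consecutive parts produce zero $c$-entries), and then identify the common value $W_k=kN_k+\sum_{i<k}if_i$ with the stated double sum by a summation interchange. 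What the paper's approach buys is brevity and reuse of established structure — indeed the point of the paper is that $c$-notation makes such compositions nearly mechanical; what yours buys is self-containedness, needing only Schneider--Schneider's original formulas \eqref{pi} and \eqref{sigma} and no appeal to Theorem \ref{c_notation_theorem} or its corollaries. A small bonus of your write-up is that you explicitly verify the bases $V_1<V_2<\dots<V_r$ are distinct (via $N_{k+1}\ge f_r\ge 1$), so the frequency notation in the statement is unambiguous; the paper leaves this implicit.
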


Through the lens of Theorem \ref{c_notation_theorem} and $c$-notation, we will more clearly describe $\pi\circ\sigma$ in Section \ref{section_bijections}.

In Section \ref{section_c-notation}, we prove Theorem \ref{c_notation_theorem} and Corollary \ref{bijection_into_squares_corollary}.  In Section \ref{section_bijections}, we discuss bijections between sequentially congruent partitions and other classes of partitions, and we prove Theorem \ref{pi_composed_sigma_theorem}.  In Section \ref{section_generalizations}, we introduce generalizations of sequentially congruent partitions and of the bijections discussed in Section \ref{section_bijections}.  In Section \ref{section_ideals}, we explore the question of Schneider--Schneider regarding how sequentially congruent partitions fit into Andrews' theory of partition ideals.

\section{$c$-Notation}\label{section_c-notation}

In this section, we prove Theorem \ref{c_notation_theorem} and then introduce notation that is useful for working with sequentially congruent partitions. This new notation will be used frequently throughout the paper.

\begin{proof}[Proof of Theorem \ref{c_notation_theorem}]
Let $\lambda$ be a partition of the form \begin{align*}
    \lambda
    &=c_1(1) \star c_2(2,2) \star c_3(3,3,3) \star \dots \star c_r(\underbrace{r,\dots,r}_{r\text{ times}}) \\ 
    &= (c_1+2c_2+\dots+rc_r, 2c_2+\dots+rc_r,\dots,rc_r).
\end{align*}
It is clear that $\ell(\lambda)=r$ and that $rc_r\equiv 0 \pmod r$. For any $\lambda_k$ with $1\leq k < r$, we have that $\lambda_k=kc_k+(k+1)c_{k+1}+ \dots +rc_r$ and $\lambda_{k+1}=(k+1)c_{k+1}+ \dots +rc_r$, and hence $\lambda_k-\lambda_{k+1}=kc_k$. Then $\lambda_k\equiv \lambda_{k+1}\pmod k$. Therefore, $\lambda$ is a sequentially congruent partition.

Now let $\lambda = (\lambda_1,\lambda_2,\dots,\lambda_r)$ be a sequentially congruent partition. Then $\lambda_r\equiv 0\pmod r$ and $\lambda_k\equiv \lambda_{k+1} \pmod k$ for all $1\leq k < r$. 
Thus $\lambda_r=r c_r$, $\lambda_{r-1}-\lambda_r=(r-1)c_{r-1}$, \dots, $\lambda_2-\lambda_3=2c_2$, and $\lambda_1-\lambda_2 =c_1$ for some nonnegative integers $c_1,\dots,c_r$. 
Then note that we can rewrite the parts as follows:
\begin{align*}
    \lambda_{r-1}&=(r-1)c_{r-1}+\lambda_r \\
    &= (r-1)c_{r-1}+ rc_r\\
    \lambda_{r-2}&=(r-2)c_{r-2}+\lambda_{r-1} \\
    &= (r-2)c_{r-2}+ (r-1)c_{r-1}+ rc_r\\
& \; \; \; \; \; \; \; \;\; \; \;\; \; \; \;\; \; \; \; \; \vdots \\
    \lambda_{2}&=2c_2+3c_3+\dots+(r-1)c_{r-1}+ rc_r\\
    \lambda_{1}&=c_1+2c_2+3c_3+\dots+(r-1)c_{r-1}+ rc_r.
\end{align*}
Therefore 
\begin{align*}
    \lambda &= (c_1+2c_2+\dots+rc_r, 2c_2+\dots+rc_r,\dots,rc_r)\\
    &=c_1(1) \star c_2(2,2) \star c_3(3,3,3) \star \dots \star c_r(\underbrace{r,\dots,r}_{r\text{ times}}).
\end{align*}
Thus any sequentially congruent partition can be written in the desired form.

Finally, suppose
\begin{align*}
\lambda&=c_1(1) \star c_2(2,2) \star c_3(3,3,3) \star \dots \star c_r(\underbrace{r,\dots,r}_{r\text{ times}})\\
&=c_1'(1) \star c_2'(2,2) \star c_3'(3,3,3) \star \dots \star c_r'(\underbrace{r,\dots,r}_{r\text{ times}})
\end{align*}
are two representations of the same partition $\lambda$ in $c$-notation. Observe that $\lambda_r=rc_r$ and $\lambda_r=rc_r'$. Hence $c_r=c_r'$. Now, let $n\geq 1$ be an integer, and assume that $c_{r-k}=c'_{r-k}$ for all $0\leq k<n$. Then \begin{align*}
\lambda_{r-n}&=(r-n)c_{r-n}+(r-n+1)c_{r-n+1}+\dots+rc_r\\
&=(r-n)c_{r-n}+(r-n+1)c_{r-n+1}'+\dots+rc_r'\\
\intertext{by the induction hypothesis, and}\lambda_{r-n}&=(r-n)c_{r-n}'+(r-n+1)c_{r-n+1}'+\dots+rc_r'
\end{align*}
by assumption, so $c_{r-n}=c_{r-n}'$.  Thus, by induction, we have that $c_{r-n}=c_{r-n}'$ for all $0\leq n< r$, and the representation is unique.
\end{proof}

\begin{definition}
Since the representation in Theorem \ref{c_notation_theorem} is unique, we can denote any sequentially congruent partition $\lambda=c_1(1) \star c_2(2,2) \star c_3(3,3,3) \star \dots \star c_r (\underbrace{r, \dots, r}_{r \text{ times}})$ simply by $\lambda=[c_1,c_2,\dots,c_r]$. We refer to this notation as \textit{$c$-notation}, and we distinguish it from standard notation (which uses parentheses) and frequency notation (which uses angle brackets) by using square brackets.
\end{definition}
For example, we write the partition $(8, 6, 4, 4)$ in $c$-notation as $$(8,6,4,4)=2(1) \star 1(2,2) \star 0(3,3,3) \star 1(4,4,4,4) = [2, 1, 0, 1].$$
Note that a partition $\lambda = [c_1, \dots, c_r]$ has length $r$ and size $c_1 + 4c_2 + \cdots + r^2c_r = \sum_{i = 1}^{r}i^2 c_i$. Furthermore, $\lambda_i=\sum_{j=i}^{r} j c_j$ for each $1\leq i\leq r$. Then observe that $\lambda_i-\lambda_{i+1}=ic_i$, and so $c_i=(\lambda_i-\lambda_{i+1})/i$ for each $i$. Since we consider $\lambda_i=0$ for all $i>\ell(\lambda)$, we have that $c_r=\lambda_r/r$ and $c_i =0$ for all $i > r$. 

The Young diagram of a sequentially congruent partition is composed of squares of varying sizes, as observed by Schneider--Sellers--Wagner \cite{schneider2}. This can be seen in the example below, which is the Young diagram for the sequentially congruent partition $(16, 15, 11, 5, 5)$.

\ytableausetup{mathmode, boxsize=1em, centertableaux}
\begin{figure}[H]
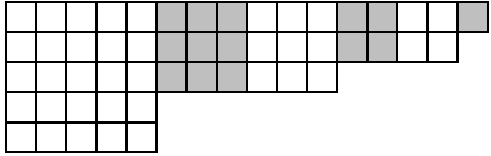

\centering
\begin{ytableau}
~ &  & & & &*(lightgray) &*(lightgray) &*(lightgray) & & & &*(lightgray) &*(lightgray) & & &*(lightgray) \\
~  & & & & &*(lightgray) &*(lightgray) &*(lightgray) & & & &*(lightgray) &*(lightgray) & &\\
~  & & & & &*(lightgray) &*(lightgray) &*(lightgray)& & & \\
~  & & & & \\
& & & &  \\
\end{ytableau}
\caption{Young diagram of $(16, 15, 11, 5, 5) \in \s$}
\end{figure}
In fact, the number of $i\times i$ squares corresponds to the value of $c_i$ in the $c$-notation representation of the partition. For the above example, note that $(16, 15, 11, 5, 5)=[1,2,2,0,1]$, and for each $1\leq i \leq 5$, the $i\times i$ square occurs $c_i$ times.  In general, the Young diagram of a sequentially congruent partition looks like this:

\begin{figure}[H]
\begin{center}
	\begin{tikzpicture} [scale = 0.3]
	
		\node (bigdots) at (-13, 0) {$\cdots$};
		
    		\draw (-12,0) grid (-7,-5);
    		\node (dots) at (-6, -2.5) {$\cdots$};
		\draw (-5, 0) grid (0, -5);
    
    		\draw [
    		thick,
    		decoration={
		        brace,
		        mirror,
		        raise = 0.1cm,
		    },
		    decorate
		    ] (-12, -5) -- (0, -5)
		    node [midway, anchor = north, yshift = -1mm, xshift = -1mm] {$c_5$ times};
	
    		\draw (0,0) grid (4,-4);
    		\node (dots) at (5, -2) {$\cdots$};
		\draw (6, 0) grid (10, -4);
    
    		\draw [
    		thick,
    		decoration={
		        brace,
		        mirror,
		        raise = 0.1cm,
		    },
		    decorate
		    ] (0.2, -4) -- (10, -4)
		    node [midway, anchor = north, yshift = -1mm, xshift = -1mm] {$c_4$ times};
		    
    		\draw (10,0) grid (13,-3);
    		\node (dots) at (14, -1.5) {$\cdots$};
		\draw (15, 0) grid (18, -3);
    
    		\draw [
    		thick,
    		decoration={
		        brace,
		        mirror,
		        raise = 0.1cm,
		    },
		    decorate
		    ] (10.2, -3) -- (18, -3)
		    node [midway, anchor = north, yshift = -1mm, xshift = -1mm] {$c_3$ times};
		    
    		\draw (18,0) grid (20,-2);
    		\node (dots) at (21, -1) {$\cdots$};
		\draw (22, 0) grid (24, -2);
    
    		\draw [
    		thick,
    		decoration={
		        brace,
		        mirror,
		        raise = 0.1cm,
		    },
		    decorate
		    ] (18.2, -2) -- (24, -2)
		    node [midway, anchor = north, yshift = -1mm, xshift = -1mm] {$c_2$ times};
		    
    		\draw (24,0) grid (25,-1);
    		\node (dots) at (26, -0.5) {$\cdots$};
		\draw (27, 0) grid (28, -1);
    
    		\draw [
    		thick,
    		decoration={
		        brace,
		        mirror,
		        raise = 0.1cm,
		    },
		    decorate
		    ] (24.2, -1) -- (28, -1)
		    node [midway, anchor = north, yshift = -1mm, xshift = 1mm] {$c_1$ times};
		
	\end{tikzpicture}
\end{center}
\caption{Young diagram of a partition $[c_1,c_2,\dots]\in\s$}
\end{figure}
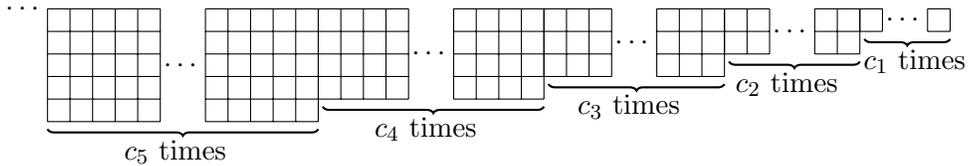

We now prove the bijection given in \cite{schneider2} between sequentially congruent partitions and partitions whose parts are squares using $c$-notation.

\begin{proof}[Proof of Corollary \ref{bijection_into_squares_corollary}]
Let $\s(n)$ denote the set of \SCP s of size $n$, and let $\p_{\N^2}(n)$ denote the set of partitions of $n$ whose parts are perfect squares. We have from Theorem \ref{c_notation_theorem} that any partition $\phi\in\s$ can be uniquely written in $c$-notation as $\phi=[c_1, \dots, c_r]$. Additionally, any partition $\lambda \in \p_{\N^2}$ can be uniquely written as $\lambda = \big\langle {(1^2)}^{d_1}, {( 2^2 )}^{d_2}, \dots, {( r^2 )}^{d_r}\big\rangle$, for some nonnegative integers $d_1,\dots,d_r$. Because both of these descriptions are unique, and each $c_i$ can take precisely the same values as $d_i$, we have a bijection $\psi: \s \to \p_{\N^2}$ defined by $$\psi \left( [c_1, c_2, \dots, c_r] \right) := \left\langle {(1^2)}^{c_1}, {( 2^2 )}^{c_2}, \dots, {( r^2 )}^{c_r} \right\rangle.$$
Recall that the size of the partition $\phi=[c_1, \dots, c_r]\in\s$ is $\sum_{i=1}^r i^2c_i$. Furthermore, the size of the output of $\psi$ is $(1^2)c_1 +(2^2)c_2 + \dots + ( r^2 )c_r = \sum_{i=1}^r i^2c_i$. Therefore, $\psi$ is a size-preserving bijection.
\end{proof}

This bijection is easily understood through Young diagrams. We have shown that sequentially congruent partitions have Young diagrams composed of squares. To map a sequentially congruent partition to a partition whose parts are squares, we simply transform each of the $i\times i$ squares into a row with $i^2$ boxes, as shown below.

\begin{center}
\begin{figure}[H]%
\subfloat{{\ytableausetup
{mathmode, boxsize=.99em, centertableaux}
\begin{ytableau}
  ~ & & & *(lightgray) & *(lightgray) & *(lightgray) & & & *(lightgray) & & *(lightgray) \\
  ~ & & & *(lightgray) & *(lightgray) & *(lightgray) & & \\
  ~ & & & *(lightgray) & *(lightgray) & *(lightgray) \\
\end{ytableau}}}%
$\hspace{.25cm}\xmapsto{\;\;\psi\;\;}$
\subfloat{{\ytableausetup
{mathmode, boxsize=1em, centertableaux}
\begin{ytableau} 
~ & & & & & & & & \\
*(lightgray) &*(lightgray) &*(lightgray) & *(lightgray)& *(lightgray)& *(lightgray)& *(lightgray)&*(lightgray) & *(lightgray)\\
~ & & &\\
*(lightgray)\\ 
\\
*(lightgray)
\end{ytableau}}}%
\label{bijection_into_squares_figure}
\caption{The bijection $\psi: \s \to \p_{\N^2}$}
\end{figure}
\end{center}

\section{Sequentially Congruent Partition Bijections}\label{section_bijections}

Let $\p(n)$ denote the set of partitions of $n$, and let $\s_{\LG = n}$ denote the set of sequentially congruent partitions with largest part $n$. Recall that Schneider and Schneider \cite{schneider1} defined the bijections $\pi: \p(n) \to \s_{\LG = n}$ and $\sigma: \s_{\LG = n} \to \p(n)$ as in \eqref{pi} and \eqref{sigma}.  Here we rewrite $\pi$ and $\sigma$ using $c$-notation and describe how they transform the Young diagrams of partitions. 

\begin{proposition}
\label{pi_c_representation_proposition}
Let $\lambda=(\lambda_1, \lambda_2, \dots, \lambda_r)$ be any partition. We have that $\pi(\lambda) = [c_1, \dots, c_r]$, where $c_i = \lambda_i - \lambda_{i + 1}$ for all $1 \leq i < r$ and $c_r = \lambda_r$.
\end{proposition}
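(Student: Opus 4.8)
The plan is to reduce everything to the identity recorded just after the definition of $c$-notation: any sequentially congruent partition $\phi$ has $c$-coordinates $c_i = (\phi_i - \phi_{i+1})/i$. Since Schneider and Schneider's map $\pi$ sends $\p$ into $\s$, the image $\pi(\lambda) = \lambda'$ is automatically sequentially congruent and hence has a well-defined $c$-notation; it therefore suffices to identify its coordinates. Thus I would apply the difference formula to $\phi = \lambda'$ and compute the successive differences $\lambda_i' - \lambda_{i+1}'$ directly from the defining equation \eqref{pi}.

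Concretely, for $1 \leq i < r$ I would substitute $\lambda_i' = i\lambda_i + \sum_{j=i+1}^r \lambda_j$ and $\lambda_{i+1}' = (i+1)\lambda_{i+1} + \sum_{j=i+2}^r \lambda_j$ and subtract. The two tail sums differ only by the single term $\lambda_{i+1}$, which combines with the $-(i+1)\lambda_{i+1}$ coming from the leading term to leave $-i\lambda_{i+1}$; together with $i\lambda_i$ this gives $\lambda_i' - \lambda_{i+1}' = i(\lambda_i - \lambda_{i+1})$. Dividing by $i$ produces the $i$-th $c$-coordinate $c_i = \lambda_i - \lambda_{i+1}$, as claimed. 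For the top index $i = r$, the sum $\sum_{j=r+1}^r \lambda_j$ in \eqref{pi} is empty, so $\lambda_r' = r\lambda_r$ and the $r$-th $c$-coordinate is $\lambda_r'/r = \lambda_r$, matching $c_r = \lambda_r$.

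There is no serious obstacle here: the computation is essentially a one-line telescoping cancellation. The only point requiring care is the boundary at $i = r$, where one must invoke the empty-sum convention (equivalently $\lambda_j = 0$ for $j > r$) so that the formula yields $c_r = \lambda_r$ rather than $\lambda_r - \lambda_{r+1}$. As an alternative route one could avoid the difference formula altogether and instead verify directly that $c_1(1) \star c_2(2,2) \star \cdots \star c_r(r,\dots,r)$ with these $c_i$ reproduces \eqref{pi}, using the expression $\lambda_i' = \sum_{j=i}^r j c_j$ from Theorem \ref{c_notation_theorem}; but the difference computation is cleaner and I would present that one.
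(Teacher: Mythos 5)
Your proof is correct, but it pivots on a different input than the paper's own argument. You first invoke the Schneider--Schneider result that $\pi$ maps $\p$ into $\s$, so that $\lambda'=\pi(\lambda)$ is guaranteed to have a well-defined $c$-notation, and only then recover the coordinates from the difference formula $c_i=(\lambda'_i-\lambda'_{i+1})/i$ via the telescoping computation $\lambda'_i-\lambda'_{i+1}=i(\lambda_i-\lambda_{i+1})$, together with $\lambda'_r=r\lambda_r$ for the boundary index. The paper never assumes $\pi(\lambda)\in\s$: it rewrites each part directly as $\lambda'_i=\sum_{j=i}^{r}jc_j$ by manipulating the coefficients in \eqref{pi}, and then appeals to Theorem \ref{c_notation_theorem}, which simultaneously certifies that $\lambda'$ is sequentially congruent and that $[c_1,\dots,c_r]$ is its unique representation --- this is precisely the ``alternative route'' you mention at the end but decline to present. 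The trade-off is real: your version is shorter and the cancellation cleaner, but it is logically dependent on the prior bijection theorem of \cite{schneider1}, whereas the paper's version is self-contained and in effect re-proves that $\pi$ lands in $\s$; your argument would be circular if one wanted to use this proposition to rederive that fact. Your treatment of the case $i=r$ via the empty-sum convention is correct and matches the paper's convention $c_r=\lambda_r'/r=\lambda_r$.
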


\begin{proof}
Let $\lambda=(\lambda_1,\lambda_2,\dots,\lambda_r)$ be a partition. Then we have that $\pi(\lambda)=\lambda'$, where
\begin{align*}
    \lambda'_i &= i \lambda_i + \sum_{j = i + 1}^{r} \lambda_j\\
    &= i \lambda_i + \lambda_{i + 1} + \dots + \lambda_{r - 1} + \lambda_r.
    \end{align*}
    We rewrite each coefficient of 1 and simplify as follows.
    \begin{align*}
    \lambda'_i&= i \lambda_i + (-i + (i + 1)) \lambda_{i + 1} + \dots + (-(r - 2) + (r - 1)) \lambda_{r - 1} + (-(r - 1) + r)\lambda_r \\
    &= i\lambda_i - i\lambda_{i + 1} + (i + 1)\lambda_{i + 1} - (i + 1)\lambda_{i + 2} + \dots + (r - 1)\lambda_{r - 1} - (r - 1)\lambda_r + r\lambda_r \\
    &= i(\lambda_i - \lambda_{i+1}) + (i + 1)(\lambda_{i + 1} - \lambda_{i + 2}) + \dots + (r - 1)(\lambda_{r - 1} - \lambda_r) + r \lambda_r \\
    &= \sum_{j = i}^{r-1} j(\lambda_j - \lambda_{j + 1}) + r\lambda_r.
    \end{align*}
    Then we define $c_i = \lambda_i - \lambda_{i + 1}$ for all $1 \leq i <r$ and $c_r=\lambda_r$, so that
    \begin{align*}
    \lambda'_i &= \sum_{j = i}^{r-1} jc_j + rc_r = \sum_{j = i}^{r} jc_j
\end{align*}
for all $1\leq i<r$ and $\lambda_r'=rc_r$.  Therefore, by Theorem \ref{c_notation_theorem}, we have that $\lambda' = [c_1, \dots, c_r]$.
\end{proof}

To see why $\pi:\p(n)\to\s_{\LG=n}$ maps size to largest part, we can think of its action on the Young diagram as transforming each column into the top row of a square in the resulting partition.
\begin{center}
\begin{figure}[H]%
\subfloat{{
\begin{ytableau}
*(lightgray) & *(gray) & *(lightgray) & *(gray) & *(lightgray) & *(gray) \\
*(lightgray) & *(gray) & *(lightgray) & *(gray) \\
*(lightgray) & *(gray) & *(lightgray)\\
*(lightgray) & *(gray)
\end{ytableau}}}%
$\hspace{.25cm}\xmapsto{\;\;\pi\;\;}$
\subfloat{{
\begin{ytableau} 
*(lightgray)&*(lightgray) &*(lightgray) &*(lightgray) &*(gray)&*(gray)&*(gray)&*(gray)&*(lightgray) &*(lightgray) &*(lightgray) &*(gray) & *(gray)&*(lightgray) &*(gray)\\
~& & & & & & & & & & & &  \\
~& & & & & & & & & &\\
~& & & & & & &
\end{ytableau}}}%
\label{pi_size_figure}
\end{figure}
\end{center}
Alternatively, we can describe this action by ``stretch": the map $\pi$ transforms each $i\times 1$ column in the Young diagram into an $i\times i$ square.
\begin{center}
\begin{figure}[H]%
\subfloat{{
\begin{ytableau}
~ & *(lightgray) & & *(lightgray) & & *(lightgray) \\
~ & *(lightgray) & & *(lightgray) \\
~ & *(lightgray) &\\
~ & *(lightgray)
\end{ytableau}}}%
$\hspace{.25cm}\xmapsto{\;\;\pi\;\;}$
\subfloat{{
\begin{ytableau} 
~& & & &*(lightgray)&*(lightgray)&*(lightgray)&*(lightgray)& & &&*(lightgray) & *(lightgray)& &*(lightgray)\\
~& & & &*(lightgray)&*(lightgray)&*(lightgray)&*(lightgray)& & &&*(lightgray) & *(lightgray)\\
~& & & &*(lightgray)&*(lightgray)&*(lightgray)&*(lightgray)& & &\\
~& & & &*(lightgray)&*(lightgray)&*(lightgray)&*(lightgray)
\end{ytableau}}}%
\label{pi_conjugacy_figure}
\end{figure}
\end{center}

We now turn to the map $\sigma: \s_{\LG = n} \to \p(n)$. Since any partition $\phi = (\phi_1, \phi_2, \dots, \phi_r)\in\s$ can be written in $c$-notation as $\phi = [c_1, c_2, \dots, c_r]$ with $c_i = (\phi_i - \phi_{i + 1})/i$ for all $1 \leq i <r$ and $c_r=\phi_r/r$, we immediately obtain the following description of $\sigma$ using $c$-notation. 
\begin{proposition}
\label{c_sigma_prop}
Let $\phi=[c_1,c_2,\dots,c_r]$ be any sequentially congruent partition.  We have that $\sigma(\phi) = \left<1^{c_1}, 2^{c_2}, \dots, r^{c_r} \right>$.
\end{proposition}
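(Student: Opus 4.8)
The plan is to proceed by direct substitution, exploiting the relationship between the parts $\phi_i$ of a sequentially congruent partition and the entries $c_i$ of its $c$-notation that was established earlier in Section \ref{section_c-notation}. Recall from the discussion following the definition of $c$-notation that any $\phi=(\phi_1,\dots,\phi_r)\in\s$ satisfies $c_i=(\phi_i-\phi_{i+1})/i$ for each $1\leq i<r$ and $c_r=\phi_r/r$, where we use the convention $\phi_i=0$ for $i>r$. These identities are precisely what convert the definition of $\sigma$ from standard notation into $c$-notation.

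First I would write out the definition of $\sigma$ exactly as given in \eqref{sigma}, namely
\[
\sigma(\phi)=\big\langle 1^{\phi_1-\phi_2},\,2^{(\phi_2-\phi_3)/2},\,3^{(\phi_3-\phi_4)/3},\dots,r^{\phi_r/r}\big\rangle.
\]
Then I would observe that the exponent attached to each part $i$ is exactly $(\phi_i-\phi_{i+1})/i$ for $1\leq i<r$, and $\phi_r/r$ for the final part. Substituting the $c$-notation formulas recalled above, each of these exponents equals $c_i$, so the frequency of the part $i$ in $\sigma(\phi)$ is $c_i$ for every $1\leq i\leq r$. This gives $\sigma(\phi)=\langle 1^{c_1},2^{c_2},\dots,r^{c_r}\rangle$, as claimed.

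There is no genuine obstacle here: the statement is an immediate consequence of the definitions once the $c$-notation identities from Theorem \ref{c_notation_theorem} are in hand. The only point deserving a moment of care is matching the index $i$ in the exponent of $\sigma$ with the corresponding $c_i$, including the boundary case $i=r$, where the convention $\phi_{r+1}=0$ makes $(\phi_r-\phi_{r+1})/r=\phi_r/r=c_r$, so that the last part fits the same uniform pattern as the others. This is exactly why the text preceding the statement asserts that the $c$-notation description of $\sigma$ follows ``immediately.''
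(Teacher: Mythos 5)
Your proposal is correct and matches the paper's own reasoning exactly: the paper also treats this proposition as an immediate consequence of substituting the identities $c_i=(\phi_i-\phi_{i+1})/i$ for $1\leq i<r$ and $c_r=\phi_r/r$ into the definition of $\sigma$ in \eqref{sigma}. Your attention to the boundary case $i=r$ via the convention $\phi_{r+1}=0$ is a fine touch, but there is no substantive difference in approach.
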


In the Young diagram, we can think of $\sigma$ as treating the top row of each square as a part in the resulting partition.
\begin{figure}[H]
\centering
\subfloat{{
\begin{ytableau}
*(lightgray) & *(lightgray) & *(lightgray) & *(gray) & *(gray) & *(gray) & *(lightgray) & *(lightgray) & *(gray) & *(lightgray) & *(gray)\\
~ & & & & & & & \\
~ & & & & & \\
\end{ytableau}}}%
$\hspace{.25cm}\xmapsto{\;\; \sigma \;\;}$
\subfloat{{
\begin{ytableau} 
*(lightgray) & *(lightgray) & *(lightgray) \\
*(gray)&*(gray)&*(gray)\\
 *(lightgray) & *(lightgray) \\
*(gray)\\
*(lightgray)\\
*(gray)\\
\end{ytableau}}}%
\end{figure}
Alternatively, we can describe this action by ``squish-flip": each $i\times i$ square is reduced to an $i\times 1$ column (``squish"), and then we take the conjugate (``flip").
\begin{figure}[H]
\centering
\subfloat{{
\begin{ytableau}
~ & & &*(lightgray) &*(lightgray) &*(lightgray) & & &*(lightgray) & & *(lightgray)\\
~ & & &*(lightgray) &*(lightgray) &*(lightgray) & & \\
~ & & &*(lightgray) &*(lightgray) &*(lightgray) \\
\end{ytableau}}}%
\qquad $\rightarrow{}$ \qquad
\subfloat{{
\begin{ytableau} 
~&*(lightgray)& &*(lightgray) & &*(lightgray)\\
~&*(lightgray)& \\
~&*(lightgray)\\
\end{ytableau}}}%
\qquad $\rightarrow{}$ \qquad
\subfloat{{
\begin{ytableau} 
~ & & \\
*(lightgray)&*(lightgray)&*(lightgray)\\
& \\
*(lightgray)\\
\\
*(lightgray)\\
\end{ytableau}}}
\label{squish_flip_figure}
\begin{center}
\begin{tikzpicture}
\draw[|->] (-4,1) .. controls (-1,0) and (1,0) .. (4,1) node[midway, above]{$\sigma$};
\end{tikzpicture}
\end{center}
\end{figure}
These visual interpretations also give insight into a key difference between $\pi$ and $\sigma$. The map $\pi$ treats the size of its input as the sum of the lengths of the columns, whereas $\sigma$ treats the size of its output as the sum of the lengths of the rows.  This difference will be highlighted again in the generalized bijections defined in Section \ref{section_generalizations}.

Using Propositions \ref{pi_c_representation_proposition} and \ref{c_sigma_prop}, we can more easily describe $\pi\circ\sigma$, and we use this description to prove Theorem \ref{pi_composed_sigma_theorem}.

\begin{proof}[Proof of Theorem \ref{pi_composed_sigma_theorem}]
Let $\phi=(\phi_1,\phi_2,\dots,\phi_r)=[c_1,\dots,c_r]$ be a sequentially congruent partition. Then we have that
\begin{align*}
    \pi(\sigma(\phi))&=\pi(\sigma([c_1,c_2,\dots,c_r]))\\
    &=\pi(\left<1^{c_1}, 2^{c_2},\dots,r^{c_r}\right>)\\
    &=\pi((\underbrace{r,\dots,r}_{c_r \text{ times}}, \underbrace{r-1,\dots,r-1}_{c_{r-1} \text{ times}}, \dots, \underbrace{1,\dots,1}_{c_1 \text{ times}}))\\
    &=[\underbrace{0,\dots,0,1}_{c_r \text{ terms}},\underbrace{0,\dots,0,1}_{c_{r-1} \text{ terms}},\dots,\underbrace{0,\dots,0,1}_{c_1 \text{ terms}}].
\end{align*}
We think of this as moving $c_i$ spaces and then adding a one to the $c$-notation representation of the partition, so if any of the $c_i$ are zero, we simply increment the number in the previous location of an added one. Recall that $c_r$ is nonzero by definition.
We now rewrite $(\pi\circ\sigma)(\phi)$ in frequency notation to obtain
\begin{align*}
\pi(\sigma(\phi))&=\left< \left(\sum_{i = 1}^{r} c_i\right)^{c_1}, \left(\sum_{i = 1}^{r} c_i + \sum_{i = 2}^{r} c_i\right)^{c_2}, \dots,\left(\sum_{i = 1}^{r} c_i + \sum_{i = 2}^{r} c_i + \dots + \sum_{i = r}^{r} c_i \right)^{c_r}\right>\\
&=\left< \left( \sum_{j=1}^{1} \sum_{i = j}^{r} c_i \right)^{c_1}, \left( \sum_{j=1}^{2} \sum_{i = j}^{r} c_i \right)^{c_2}, \dots, \left( \sum_{j=1}^{r} \sum_{i = j}^{r} c_i \right)^{c_r} \right>.
\end{align*}
Substituting $c_i=(\phi_i-\phi_{i+1})/i$ for $1\leq i<r$ and $c_r=\phi_r/r$ yields the desired result.
\end{proof}

It will often be more convenient to think of $\pi\circ\sigma$ in terms of the Young diagram, where we simply perform the ``squish-flip" of $\sigma$ followed by the ``stretch" of $\pi$.
All together, $\pi \circ \sigma$ transforms the Young diagram of a sequentially congruent partition by ``squish-flip-stretch".
These graphical transformations are defined well by our $c$-notation definitions of $\pi$ and $\sigma$ in Propositions \ref{pi_c_representation_proposition} and \ref{c_sigma_prop}, and in Section \ref{section_generalizations} we will apply these same definitions to partitions that are sequentially congruent in a more general sense, with $c$-notation appropriately redefined, to transform the Young diagrams in a similar way.

\section{Generalizations}\label{section_generalizations}

In this section, we define a more general notion of sequentially congruent partitions, and we describe several generalizations of the bijections $\pi$ and $\sigma$.  We also describe several generalizations of a bijection of Schneider, Sellers, and Wagner \cite{schneider2} between sequentially congruent partitions and partitions into $k$th powers.

\subsection{Generalizations of $\s$, $\pi$, and $\sigma$}
In \cite{schneider1}, Schneider and Schneider propose a more general set of frequency congruent partitions based on a set $B=\{b_1,b_2,b_3,\dots\} \subseteq \N$ and a sequence of positive integers $A=(a_1,a_2,a_3,\dots)$.  They define the set of partitions with parts from $B$ where the $b_i$th part occurs a multiple of $a_i$ times, denoted $\f_B(A)$.  Partitions in $\f_B(A)$ are of the form $\langle b_1^{n_1a_1},b_2^{n_2a_2},b_3^{n_3a_3},\dots\rangle$, where all $n_i\geq 0$, and finitely many $n_i$ are nonzero. They then define $\s_B(A)$ as the set of conjugates of partitions in $\f_B(A)$ and state that $\s_B(A)$ is a generalization of $\s$, but they do not further describe the partitions in $\s_B(A)$.  We provide a description here, and then we define generalizations of $\pi$ and $\sigma$ for these generalized sequentially congruent partitions.

The Young diagram of a partition in $\s_B(A)$ consists of rectangles of width $a_i$ and height $b_i$, each occurring $n_i$ times.  These rectangles are analogous to the squares within the Young diagrams of partitions in $\s$.  More explicitly, the Young diagram of a partition in $\s_B(A)$ is of the following form.

\begin{figure}[H]
    \label{general_young_diagram}
    \begin{center}
        \begin{tikzpicture} [scale = 0.3]
        \draw [draw=black] (14,0) rectangle (18,-3);
        
        \draw [draw=black] (22,0) rectangle (26,-3);
        
        \draw [
        thick,
        decoration={
            brace,
            mirror,
            raise = 0.1cm,
        },
        decorate
        ] (22,0) -- (22,-3)
        node [midway, anchor=east, xshift = -1mm] {$b_1$};
        \draw [
        thick,
        decoration={
            brace,
            mirror,
            raise = 0.1cm,
        },
        decorate
        ] (22,-3) -- (26,-3)
        node [midway, anchor=north, yshift = -1mm] {$a_1$};
        \node (dots) at (19, -1.5) {$\cdots$};
        \draw [
        thick,
        decoration={
            brace,
            mirror,
            raise = 0.1cm,
        },
        decorate
        ] (14,-4.3) -- (26,-4.3)
        node [midway, anchor=north, yshift = -1mm] {$n_1$ times};
        
        \draw [draw=black] (0,0) rectangle (5,-4);
        
        \draw [draw=black] (9,0) rectangle (14,-4);
        
        \draw [
        thick,
        decoration={
            brace,
            mirror,
            raise = 0.1cm,
        },
        decorate
        ] (9,0) -- (9,-4)
        node [midway, anchor=east, xshift = -1mm] {$b_2$};
        \draw [
        thick,
        decoration={
            brace,
            mirror,
            raise = 0.1cm,
        },
        decorate
        ] (9,-4) -- (14,-4)
        node [midway, anchor=north, yshift = -1mm] {$a_2$};
        \node (dots) at (6, -2) {$\cdots$};
        \draw [
        thick,
        decoration={
            brace,
            mirror,
            raise = 0.1cm,
        },
        decorate
        ] (0,-5.3) -- (14,-5.3)
        node [midway, anchor=north, yshift = -1mm] {$n_2$ times};
        
        \node (dots) at (-2, -0) {$\cdots$};
        
        \draw [draw=black] (-13,0) rectangle (-4,-7);
        
        \draw [draw=black] (-26,0) rectangle (-17,-7);
        
        \draw [
        thick,
        decoration={
            brace,
            mirror,
            raise = 0.1cm,
        },
        decorate
        ] (-13,0) -- (-13,-7)
        node [midway, anchor=east, xshift = -1mm] {$b_r$};
        \draw [
        thick,
        decoration={
            brace,
            mirror,
            raise = 0.1cm,
        },
        decorate
        ] (-13,-7) -- (-4,-7)
        node [midway, anchor=north, yshift = -1mm] {$a_r$};
        \node (dots) at (-16, -3.5) {$\cdots$};
        \draw [
        thick,
        decoration={
            brace,
            mirror,
            raise = 0.1cm,
        },
        decorate
        ] (-26,-8.3) -- (-4,-8.3)
        node [midway, anchor=north, yshift = -1mm] {$n_r$ times};
        \end{tikzpicture}
    \end{center}
    \caption{Young diagram of a partition $[n_1, \dots, n_r] \in \s_B(A)$}
\end{figure}
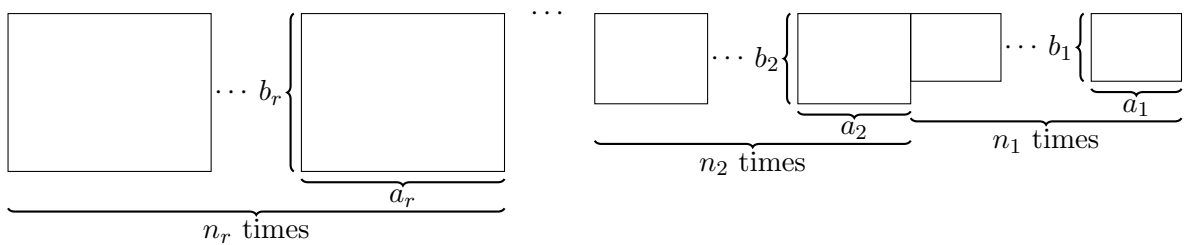

Applying a similar argument to that in the proof of Theorem \ref{c_notation_theorem}, we can show that any $\lambda \in S_B(A)$ can be uniquely written as 
\begin{align}\label{n-notation}
    \lambda &=n_1(\underbrace{a_1,\dots,a_1}_{b_1 \text{ times}})\star n_2(\underbrace{a_2,\dots,a_2}_{b_2 \text{ times}})\star n_3(\underbrace{a_3,\dots,a_3}_{b_3\text{ times}})\star\cdots
\end{align} 
with finitely many nonzero $n_i$.  From this, we can define an analogue of $c$-notation, which we call \textit{$n$-notation} to distinguish between partitions involving squares and partitions involving rectangles.

\begin{definition}
For any partition $\lambda\in\s_B(A)$, we have that $\lambda$ is of the form \eqref{n-notation}, and we write $\lambda = [n_1, n_2,\dots, n_r]_{A,B}$.  We refer to this notation as \emph{$n$-notation}.
\end{definition}

For reference, we can write a general partition $\lambda \in \s_B(A)$ in standard notation as follows:

$$\lambda=[n_1,n_2,\dots,n_r]_{A,B}=\left(\underbrace{\sum_{i=1}^ra_in_i, \dots,\sum_{i=1}^ra_in_i}_{b_1 \text{ times}},\underbrace{\sum_{i=2}^ra_in_i, \dots,\sum_{i=2}^ra_in_i}_{b_2-b_1 \text{ times}},\dots,\underbrace{a_rn_r, \dots,a_rn_r}_{b_r-b_{r-1} \text{ times}}\right).$$ 
Hence $\ell(\lambda)=b_r$ and $|\lambda|=\sum_{i=1}^r a_ib_in_i$. We then observe that $\lambda_{b_i}-\lambda_{b_i+1}=a_in_i$ for each $i$, and thus $n_i=(\lambda_{b_i}-\lambda_{b_i+1})/a_i$. 
Notice also that $\lambda_{b_i} = \lambda_{b_i - 1} = \cdots = \lambda_{b_{i - 1} + 1}$ for all $i > 1$, and we have that
\begin{enumerate}
    \item $\lambda_{b_i}\equiv \lambda_{b_i+1}\equiv \dots \equiv \lambda_{b_{i+1}}\pmod{a_i}$, and
    \item $\lambda_{b_r}\equiv 0 \pmod{a_r}.$
\end{enumerate}

We can see that $\s_B(A)$ is a generalization of $\s$, since $\s = \s_{\N}((1,2,3,\dots))$.
Throughout this section, if $B$ is omitted, then we consider $B = \N$, and if $A$ is omitted, then we consider $A = (1, 2, 3, \dots)$.  We also sometimes refer to $A$ as a subset of $\N$ instead of a sequence of natural numbers when appropriate.

Schneider and Schneider conjectured \cite{schneider1} that there are bijective maps between $\p_A$, the set of partitions of $n$ with parts from $A$ (in this context, $A\subseteq\N$), and $\s_B(A)$ that are analogous to the bijections $\pi$ and $\sigma$ between $\p$ and $\s$.  Recall that $\pi: \p(n) \to \s_{\LG = n}$ and $\sigma: \s _{\LG = n}\to \p(n)$. Furthermore, recall that $\sigma \circ \pi$ is equivalent to conjugation, and $\pi \circ \sigma$ gives an analogue of conjugation specific to sequentially congruent partitions, as defined in Section \ref{section_bijections}.  We define generalizations of $\pi$ and $\sigma$ which relate size and largest part, and we also define different generalizations whose compositions give conjugation and the ``squish-flip-stretch" analogue of conjugation for $\s_B(A)$.  It turns out that there is no generalization that simultaneously has both properties, except in very special cases.

Let $\s_B(A)_{\LG=n}$ be the set of partitions in $\s_B(A)$ with largest part $n$, and let $\p_A(n)$ be the set of partitions in $\p_A$ of size $n$.  If we wish to preserve the exact relationship between size and largest part as in the original maps $\pi$ and $\sigma$, we can define $\sigma_{AB}: \s_B(A)_{\LG = n} \to \p_A (n)$ by $$\sigma_{AB} ([n_1, \dots, n_r]_{A,B}) := \left< a_1^{n_1}, \dots, a_r^{n_r} \right>.$$
In order for this map to be a bijection, however, the terms of $A$ must be distinct, or else $\sigma_{AB}$ is not injective. If $A$ is not increasing, then the terms in the frequency notation will not be in increasing order, but by reordering the terms, the bijection holds. For a sequentially congruent partition $\phi=[n_1,\dots,n_r]_{A,B}$, we see that $|\sigma_{AB}(\phi)|=\phi_1$, since $\phi_1=\sum_{i = 1}^{r} a_i n_i$.

Unfortunately, the domain of the corresponding generalization of $\pi$ is different from the range of $\sigma_{AB}$. Notice that $\sigma_{AB}$ maps into partitions whose parts are from $A$, whereas the generalization of $\pi$ defined below maps from partitions whose columns in the Young diagram are elements of $A$, i.e., the set of conjugates of partitions in $\p_A$. This difference mirrors the key difference mentioned in Section \ref{section_bijections} between $\sigma$ and $\pi$.  The set of conjugates of partitions in $\p_A$ can be written $\s_{A}((1,1,1,\dots))$, and we let $\s_B(A,n)$ denote the set of partitions in $\s_B(A)$ of size $n$, for any $B\subseteq\N$ and any sequence $A$ of natural numbers. We define $\pi_{AB}: \s_{A}((1,1,1,\dots), n) \to \s_B(A)_{\LG = n}$ to transform the $a_i$ column in the Young diagram into a rectangle with width $a_i$ and height $b_i$. Explicitly, we define the map $\pi_{AB}$ by
$$\pi_{AB}((\lambda_1, \dots, \lambda_{a_r})) := [n_1, \dots, n_r]_{A,B},$$
where $n_i = \lambda_{a_i} - \lambda_{a_i + 1}$ for $1 \leq i < r$ and $n_r = \lambda_{a_r}$. This is also a bijection if and only if the terms of $A$ are distinct, and again, we might have to reorder if $A$ is not increasing. 

As mentioned in Section \ref{section_bijections}, we can easily replicate the graphical transformations of $\pi$ and $\sigma$ by simply replacing the $c$s in Propositions \ref{pi_c_representation_proposition} and \ref{c_sigma_prop} with $n$s as defined above. Unfortunately, in doing so, the relationship between size and largest part is no longer preserved in general. However, conjugation and the ``squish-flip-stretch" analogue of conjugation still hold under composition.
Thus we define $\pi'_{AB}: \p \rightarrow{} \s_B(A)$ by \[\pi'_{AB}((\lambda_1,\dots,\lambda_r)):=[n_1, \dots, n_r]_{A,B},\] where $n_i=\lambda_i-\lambda_{i+1}$ for $1 \leq i < r$ and $n_r = \lambda_r$. In standard notation, we have
\begin{equation}\label{pi_general}
\pi'_{AB}((\lambda_1,\dots,\lambda_r))=(\lambda_1',\lambda_2',\dots,\lambda_{b_r}'),
\end{equation}
where $\lambda_i'=a_i\lambda_i+\sum_{j=i+1}^r(a_j-a_{j-1})\lambda_j$ for $b_{i-1}<i\leq b_i$, for all $1\leq i\leq r$, noting that we consider $b_0=0$ for any $B\subseteq \N$.
We define $\sigma'_{AB}: \s_B(A)\rightarrow{} \p$ by \[ \sigma'_{AB} ([n_1, \dots, n_r]_{A,B}):=\left< 1^{n_1}, 2^{n_2},\dots,r^{n_r} \right>,\] so that for any partition $\phi=(\phi_1,\phi_2,\dots,\phi_r)\in\s_B(A)$, we have
\begin{equation}\label{sigma_general}
\sigma'_{AB}((\phi_1,\phi_2,\dots,\phi_r))=\big<1^{(\phi_{b_1}-\phi_{b_2})/a_1}, 2^{(\phi_{b_2}-\phi_{b_3})/a_2},\dots,r^{\phi_{b_r}/a_r}\big>.
\end{equation}
By \eqref{pi_general} and \eqref{sigma_general}, it is clear that 
$$(\sigma'_{AB}\circ\pi'_{AB})(\lambda)=\big\langle 1^{\lambda_1-\lambda_2},2^{\lambda_2-\lambda_3},\dots ,(r-1)^{\lambda_{r-1}-\lambda_r},r^{\lambda_r}\big\rangle,$$
so this composition is still equivalent to conjugation by Proposition \ref{conjugate_closed_form_proposition}.
Additionally, $\sigma'_{AB}$ and $\pi'_{AB}$ act in a similar manner (``squish-flip" and ``stretch", respectively) on the Young diagram of a partition. Thus the composition $\pi'_{AB}\circ \sigma'_{AB}$ is still equivalent to ``squish-flip-stretch," but with $a_i\times b_i$ rectangles instead of $i\times i$ squares.

Although the relation between size and largest part is not usually preserved by $\pi'_{AB}$ and $\sigma'_{AB}$, there is a special case in which size is mapped to a multiple of the largest part and vice versa.

\begin{proposition}
\label{general_size_to_largest_part_special_case_proposition}
Suppose the sequence $A$ is of the form $A = (a, 2a, 3a, \dots )$.  Then we have that $\pi'_{AB}: \p(n) \rightarrow{} \s_B(A)_{\LG=an}$ and $\sigma'_{AB}:\s_B(A)_{\LG=an}\rightarrow{}\p(n)$.

\end{proposition}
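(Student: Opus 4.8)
The plan is to reduce everything to a single computation of the largest part of a partition in $\s_B(A)$ in terms of its $n$-notation, and then to specialize to $A = (a, 2a, 3a, \dots)$. Recall from the standard-notation description in the excerpt that for $\phi = [n_1, \dots, n_r]_{A,B} \in \s_B(A)$ the largest part is $\phi_1 = \sum_{i=1}^r a_i n_i$. When $a_i = ia$, this becomes $\phi_1 = a \sum_{i=1}^r i n_i$, so the largest part factors as $a$ times the quantity $\sum_{i=1}^r i n_i$. That quantity is exactly the size of $\sigma'_{AB}(\phi) = \langle 1^{n_1}, 2^{n_2}, \dots, r^{n_r}\rangle$, since the part $i$ contributes $i n_i$ to the size. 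I thus obtain the clean identity $\phi_1 = a\,|\sigma'_{AB}(\phi)|$, valid for every $\phi \in \s_B(A)$ in this special case.

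The $\sigma'_{AB}$ direction then follows immediately: if $\phi \in \s_B(A)_{\LG=an}$, then $an = \phi_1 = a\,|\sigma'_{AB}(\phi)|$, whence $|\sigma'_{AB}(\phi)| = n$ and so $\sigma'_{AB}(\phi) \in \p(n)$.

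For the $\pi'_{AB}$ direction, I take $\lambda = (\lambda_1, \dots, \lambda_r) \in \p(n)$ and write $\pi'_{AB}(\lambda) = [n_1, \dots, n_r]_{A,B}$ with $n_i = \lambda_i - \lambda_{i+1}$ for $i < r$ and $n_r = \lambda_r$. By the identity above its largest part is $a\sum_{i=1}^r i n_i$, so it suffices to check that $\sum_{i=1}^r i n_i = n$. I would verify this either directly, via the telescoping (Abel) summation $\sum_{i=1}^{r-1} i(\lambda_i - \lambda_{i+1}) + r\lambda_r = \sum_{i=1}^r \lambda_i = |\lambda| = n$, or, more conceptually, by recalling that $\sigma'_{AB}\circ\pi'_{AB}$ is conjugation (established just before the proposition via Proposition \ref{conjugate_closed_form_proposition}): then $\sum_{i=1}^r i n_i = |\sigma'_{AB}(\pi'_{AB}(\lambda))|$, and since conjugation preserves size this equals $|\lambda| = n$. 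Either way the largest part is $an$, so $\pi'_{AB}(\lambda) \in \s_B(A)_{\LG=an}$.

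There is no genuine obstacle here; the only point requiring care is the identity $\sum_{i=1}^r i n_i = n$ under the substitution $n_i = \lambda_i - \lambda_{i+1}$, which is a routine telescoping sum (equivalently, size-preservation of conjugation). The conceptual content is simply that multiplying every $a_i$ by the common factor $a$ scales the largest part by $a$ while leaving the $\sigma'_{AB}$-image, and hence its size, unchanged.
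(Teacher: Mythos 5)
Your proof is correct and takes essentially the same route as the paper's: both rest on the telescoping identity $\sum_{i=1}^{r} i\,n_i = |\lambda|$ for $n_i = \lambda_i - \lambda_{i+1}$ (with $n_r = \lambda_r$) and the factorization $\sum_{i=1}^{r} a_i n_i = a \sum_{i=1}^{r} i\,n_i$ when $a_i = ia$. The paper's proof additionally establishes a converse — that any functional relationship $f$ between size and largest part forces $a_j = a_1 j$ for all $j$, so the scalar factor must be $a_1$ — but since that goes beyond the proposition as stated, your omission of it is not a gap.
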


\begin{proof}
Note that the largest part of $[n_1,\dots,n_r]_{A,B}\in\s_B(A)$ is $\sum_{i = 1}^{r} a_i n_i$ and the size of the output of $\sigma'_{AB}$ is $n_1 + 2n_2 + \dots + rn_r= \sum_{i = 1}^{r} i n_i$. 
Let $\lambda$ be any partition of $n$, and consider $\pi'_{AB}(\lambda)=[n_1, \dots, n_r]_{A,B}$. We have that
\begin{align*}
|\lambda| &= \lambda_1 + \lambda_2 + \dots + \lambda_r \\
&= \lambda_1 + (-\lambda_2 + \lambda_2) + \lambda_2 + 2(-\lambda_3 + \lambda_3) + \dots + (r-1)(-\lambda_r + \lambda_r) + \lambda_r \\
&= (\lambda_1 - \lambda_2) + 2(\lambda_2 - \lambda_3) + 3(\lambda_3 - \lambda_4) + \dots + r \lambda_r \\
&= n_1 + 2n_2 + \dots + r n_r \\
&= \sum_{i = 1}^{r} i n_i.
\end{align*}
Thus $\pi'_{AB}$ maps a partition of size $\sum_{i = 1}^{r} i n_i$ to a partition with largest part $\sum_{i = 1}^{r} a_i n_i$, and similarly $\sigma'_{AB}$ maps a partition with largest part $\sum_{i = 1}^{r} a_i n_i$ to a partition of size $\sum_{i = 1}^{r} i n_i$.

Now, suppose $f: \N \to \N$ is a map so that $$f \left( \sum_{i = 1}^{\infty} i n_i \right) = \sum_{i = 1}^{\infty} a_i n_i$$ for all partitions $[n_1,n_2,\dots]_{A,B}$. Then $f$ describes a relationship between largest part and size. Since this equality holds for all partitions, consider the partition $[k]$, where $n_1=k\in\mathbb{N}$ and $n_i = 0$ for all $i \geq 2$. Then we have that $f(n_1+2n_2+\cdots)=a_1n_1+a_2n_2+\cdots$, so that $f(k)=a_1k$.  Similarly, if we consider the partition where $n_j = 1$ for some $j \in \N$, and all other $n_i = 0$, we see that $f(j) = a_j$, meaning that $a_j = a_1j$. Therefore, if the largest part of a partition $\phi\in\s_B(A)$ is related to the size of $\sigma'_{AB}(\phi)$, or the size of a partition $\lambda\in\p$ is related to the largest part of $\pi'_{AB}(\lambda)$, then this relationship must be scalar by a factor of $a_1$, and the sequence $A$ must satisfy $a_j = a_1j$ for all $j \in \N$.

Note that if $A$ satisfies $a_j = a_1j$ for all $j \in \N$, then
$$\sum_{i = 1}^{\infty} a_i n_i = \sum_{i = 1}^{\infty} a_1i n_i = a_1 \sum_{i = 1}^{\infty} i n_i.$$
Thus when $A$ is defined as above, the relationship holds.
\end{proof}

\subsection{Partitions into $k$th powers}
In \cite{schneider2}, Schneider, Sellers, and Wagner defined $S(j,k)$ to be the set of partitions $\lambda=(\lambda_1,\lambda_2,\dots,\lambda_r)$ that satisfy \begin{enumerate}
    \item $\lambda_i-\lambda_{i+1}=ji^k$ for $1\leq i\leq r-1$, and
    \item $\lambda_r=ji^r$.
\end{enumerate}
They found that partitions in $S(j,k)$ of size $n$ are in bijection with partitions of $n$ into $(k+1)$th powers where each part occurs exactly $j$ times. As a generalization, we define $S(k)$ to be the subset of partitions $\lambda = (\lambda_1, \lambda_2, \dots, \lambda_r)$ that satisfy \begin{enumerate}
    \item $\lambda_i \equiv \lambda_{i + 1}\pmod{i^k}$, and
    \item $\lambda_r\equiv 0\pmod{r^k}$.
\end{enumerate}
Note that the set of sequentially congruent partitions is the specialization $S(1)$, and in fact all $S(k)$ are actually just $S_\mathbb{N}(\N^k)$ where $\N^k=\{1^k,2^k,3^k,\dots\}$.

We define two bijections, $\sigma_k: \s_B(\N^k)_{\LG = n} \to \p_{\N^k}(n)$ and $\psi_k: \s(\N^k, n) \to \p_{\N^{k + 1}}(n)$, by $$\sigma_k ([n_1, \dots, n_r]) := \big< {(1^k)}^{n_1}, {( 2^k)}^{n_2}, \dots, {( r^k)}^{n_r} \big>$$ and $$\psi_k ([n_1, \dots, n_r]) := \big< {(1^{k+1})}^{n_1}, {( 2^{k + 1} )}^{n_2}, \dots, {( r^{k + 1} )}^{n_r} \big>.$$
Note that $\sigma_1$ is identical to the map $\sigma$ defined by Schneider--Schneider in \cite{schneider1}, and $\psi_1$ is identical to the bijection between $\s(n)$ and $\p_{\N^2}(n)$ given by Schneider--Sellers--Wagner in \cite{schneider2}.

Notice that we can compose $\sigma_{k + 1}$ and $\psi_{k}^{-1}$, meaning that $\s_B(\N^{k + 1})_{\LG = n}$ is in bijection with $\s(\N^k,n)$. This becomes clear when we consider the Young diagrams of the partitions in each set. The rectangles that comprise the Young diagrams of the partitions in $\s_B(\N^{k + 1})_{\LG = n}$ have height $i$ and width $i^{k + 1}$. Since the largest part comes from the sum of all the rectangles' widths, we simply transform the top row of each rectangle into a rectangle of height $i$ and width $i^k$ to get the corresponding partition in $\s(\N^k, n)$. This gives the bijection $\sigma_{k + 1} \circ \psi_{k}^{-1}: \s_B(\N^{k + 1})_{\LG = n} \to \s(\N^k,n)$ defined by $$\sigma_{k + 1} \circ \psi_{k}^{-1} \left( [n_1,\dots,n_r]_{\N^{k+1}, B} \right) = [n_1,\dots,n_r]_{\N^k, \N}.$$

As an extension of this idea, we can define $\eta_{k,p}: \s_B(\N^k)_{\LG = n} \to \s_{ \N^p } ( \N^{k-p}, n)$ for $1 \leq p \leq k$ by
$$\eta_{k,p} \left( [n_1,\dots,n_r]_{\N^k, B} \right) := [n_1,\dots,n_r]_{\N^{k-p}, \N^{p}}.$$
The Young diagrams of partitions in $\s_B(\N^k)_{\LG = n}$ consist of rectangles of width $i^k$, and so $\eta$ maps the top row of each rectangle to a rectangle with area $i^k$. Therefore, $\eta$ maps partitions with largest part $n$ to partitions of size $n$. 

We can define a similar extension $\tau_{k,p,q}: \s_{\N^{p}}(\N^{k-p},n) \to \s_{\N^{q}}(\N^{k-q},n)$ for integers $1 \leq p, q \leq k$ by
$$\tau_{k,p,q}([n_1, \dots, n_r]_{\N^{k-p}, \N^{p}}) := [n_1, \dots, n_r]_{\N^{k-q}, \N^{q}}.$$
This $\tau$ maps rectangles of area $i^k$ to different rectangles with the same area, thus preserving size.

\section{Partition Ideals}\label{section_ideals}

In this section, we change gears and investigate a different question of Schneider and Schneider regarding partition ideals.  Partition ideals were defined by Andrews in the 1970s \cite{andrews1974, andrews} and have been studied more recently by Andrews, Chern, and Li \cite{ACL,C,CL}. In 2019, Schneider and Schneider \cite{schneider1} speculated that frequency congruent partitions, the conjugates of sequentially congruent partitions, form a type of ``quasi-ideal".  Although we do not expand on the notion of ``quasi-ideal" with regard to frequency congruent partitions, this speculation motivated our investigation into the connection between sequentially congruent partitions and partition ideals.  First, we recall the definition of a partition ideal.  For the remainder of the paper, parts with frequency zero are included in the frequency notation representation of all partitions.

\begin{definition}
A subset $\mathscr{I}$ of $\p$ is a \textit{partition ideal} if for any $\lambda\in\mathscr{I}$, removing any number of parts from $\lambda$ yields a new partition $\gamma\in\mathscr{I}$.
\end{definition}

In order to find a partition ideal composed of sequentially congruent partitions, we define the sequence $$\mathcal{A}:=(1, 2, 6, \dots, \lcm(1,\dots,i),\dots)$$ and consider $\s(\mathcal{A})$, the set of all partitions $\lambda=(\lambda_1,\lambda_2,\dots)$ such that $\lambda_i\equiv\lambda_{i+1}\pmod{\lcm(1,\dots,i)}$ for all $i$.  We will use the following lemma to prove that $\s(\mathcal{A})$ is the largest possible subset of sequentially congruent partitions which forms a partition ideal.

\begin{lemma}
\label{s(a)_description_lemma}
A partition $\lambda=(\lambda_1,\lambda_2,\dots,\lambda_r)$ is in $\s(\mathcal{A})$ if and only if each part $\lambda_i$ is divisible by every positive integer less than or equal to $i$.
\end{lemma}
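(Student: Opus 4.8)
The plan is to prove the biconditional directly, working from the defining congruence condition of $\s(\mathcal{A})$ to the divisibility statement and back. Recall that $\lambda \in \s(\mathcal{A})$ means $\lambda_i \equiv \lambda_{i+1} \pmod{\lcm(1,\dots,i)}$ for all $i$, together with the terminal condition $\lambda_r \equiv 0 \pmod{\lcm(1,\dots,r)}$ (the appropriate analogue of condition (2) in the definition of $\s_B(A)$, specialized to $A = \mathcal{A}$). The claim to establish is that this is equivalent to: for every $i$, the part $\lambda_i$ is divisible by every positive integer $m \leq i$, i.e., $\lcm(1,\dots,i) \mid \lambda_i$.

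For the forward direction, I would argue by downward induction on the index, starting from $\lambda_r$. The base case is immediate: the terminal condition gives $\lcm(1,\dots,r) \mid \lambda_r$. For the inductive step, suppose $\lcm(1,\dots,i+1) \mid \lambda_{i+1}$. The congruence condition gives $\lcm(1,\dots,i) \mid (\lambda_i - \lambda_{i+1})$, so I want to conclude $\lcm(1,\dots,i) \mid \lambda_i$. The key observation is that $\lcm(1,\dots,i) \mid \lcm(1,\dots,i+1)$, so $\lcm(1,\dots,i)$ divides $\lambda_{i+1}$ as well; adding this to $\lcm(1,\dots,i) \mid (\lambda_i - \lambda_{i+1})$ yields $\lcm(1,\dots,i) \mid \lambda_i$, which is exactly the statement that every $m \leq i$ divides $\lambda_i$.

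For the reverse direction, assume each $\lambda_i$ is divisible by every positive integer up to $i$, i.e., $\lcm(1,\dots,i) \mid \lambda_i$ for all $i$. I need to recover both defining conditions. The terminal condition $\lcm(1,\dots,r) \mid \lambda_r$ is immediate from the hypothesis at $i = r$. For the congruence $\lambda_i \equiv \lambda_{i+1} \pmod{\lcm(1,\dots,i)}$, I again use $\lcm(1,\dots,i) \mid \lcm(1,\dots,i+1) \mid \lambda_{i+1}$ together with $\lcm(1,\dots,i) \mid \lambda_i$, so their difference is divisible by $\lcm(1,\dots,i)$, giving the congruence. This completes the equivalence.

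I do not anticipate a serious obstacle; the proof rests entirely on the single structural fact that $\lcm(1,\dots,i)$ divides $\lcm(1,\dots,i+1)$, which lets divisibility propagate cleanly between consecutive parts in both directions. The one point requiring care is correctly stating the terminal condition for $\s(\mathcal{A})$ — ensuring that condition (2) reads $\lambda_r \equiv 0 \pmod{\lcm(1,\dots,r)}$ rather than some other modulus — and being explicit that ``$\lambda_i$ divisible by every positive integer $\leq i$'' is precisely ``$\lcm(1,\dots,i) \mid \lambda_i$,'' so that the two formulations are interchangeable throughout the argument.
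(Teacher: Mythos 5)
Your proof is correct and follows essentially the same route as the paper: a downward induction for the forward direction, anchored at the terminal condition $\lcm(1,\dots,r)\mid\lambda_r$ and propagated via the fact that $\lcm(1,\dots,i)\mid\lcm(1,\dots,i+1)$, and the same divisibility-of-differences argument for the converse. The only differences are cosmetic (your induction is stated at index $i+1\to i$ rather than $i\to i-1$).
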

\begin{proof}

Let $\lambda=(\lambda_1,\lambda_2,\dots,\lambda_r) \in \s(\mathcal{A})$. We know that $\lambda_r \equiv 0 \pmod {\lcm (1, \dots, r)}$ by definition. We proceed by induction, assuming that $\lcm (1, \dots, i) \mid \lambda_i$ for some $1<i\leq r$. Since $\lcm (1, \dots, i-1) \mid \lcm (1 \dots, i)$, we have $\lcm (1, \dots, i-1) \mid \lambda_i$. Furthermore, since $\lambda_{i - 1} \equiv \lambda_i \pmod{\lcm (1, \dots, i - 1)}$ by definition, we also have $\lcm (1, \dots, i - 1) \mid \lambda_{i - 1}$.  Thus $\lcm(1, \dots, i) \mid \lambda_i$ for all $i$.

Now, let $\lambda$ be a partition so that $\lcm(1, \dots, i) \mid \lambda_i$ for all $i$. Then for any $i$, $\lambda_i - \lambda_{i + 1}$ is also divisible by $\lcm(1, \dots, i)$, and we have $\lambda_i\equiv\lambda_{i+1}\pmod{\lcm(1,\dots,i)}$. Hence $\lambda \in \s(\mathcal{A})$.
\end{proof}

\begin{proposition}
The set $\s(\mathcal{A})$ is a maximal partition ideal of \SCP s.
\end{proposition}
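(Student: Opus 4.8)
The statement to prove is that $\s(\mathcal{A})$ is a maximal partition ideal of sequentially congruent partitions. This has two components: first that $\s(\mathcal{A})$ is a partition ideal, and second that it is maximal among partition ideals contained in $\s$.

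Let me think about the first part. A partition ideal requires that removing any parts from a partition in the set yields another partition in the set. By Lemma \ref{s(a)_description_lemma}, $\s(\mathcal{A})$ consists of partitions where part $\lambda_i$ is divisible by every positive integer $\leq i$. Wait, I need to be careful about what "removing parts" does to the indices. When we remove parts from $\lambda = (\lambda_1, \dots, \lambda_r)$, we get a subsequence, and the remaining parts get reindexed. The key observation is that if we remove parts, each surviving part can only move to an earlier (smaller) index position. So if $\lambda_i$ survives and moves to position $j \leq i$, we need $\lambda_i$ (now called $\gamma_j$) to be divisible by all integers $\leq j$. But $\lambda_i$ is divisible by all integers $\leq i \geq j$, so it's certainly divisible by all integers $\leq j$. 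That handles the ideal property cleanly.

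Now for maximality. Suppose $\mathscr{I}$ is any partition ideal with $\s(\mathcal{A}) \subseteq \mathscr{I} \subseteq \s$, and I want to show $\mathscr{I} = \s(\mathcal{A})$. The approach: take any $\lambda \in \mathscr{I}$; since $\mathscr{I} \subseteq \s$, $\lambda$ is sequentially congruent. I want to show $\lambda \in \s(\mathcal{A})$, i.e., that each $\lambda_i$ is divisible by all integers $\leq i$. Since $\mathscr{I}$ is an ideal, for each index $i$, removing all parts except $\lambda_i$... but removing parts moves $\lambda_i$ to position 1, and the single-part partition $(\lambda_i)$ need only satisfy $\lambda_i \equiv 0 \pmod 1$, which is trivial. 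So removing down to a single part loses information. The trick must be to remove parts so that $\lambda_i$ lands in position $i$ (the largest position it can occupy), which requires keeping all parts $\lambda_1, \dots, \lambda_i$ (they stay in positions $1, \dots, i$) and possibly some tail. Actually to force $\lambda_i$ to require divisibility by $i$, I want a sub-partition of $\lambda$ in which $\lambda_i$ is the last part at index $i$. Keeping parts $\lambda_1, \dots, \lambda_i$ and deleting the rest gives a partition $\mu = (\lambda_1, \dots, \lambda_i) \in \mathscr{I} \subseteq \s$, and sequential congruence of $\mu$ forces $\mu_i = \lambda_i \equiv 0 \pmod i$. That gives divisibility by $i$. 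To get divisibility by smaller integers, apply the same to truncations: keeping $\lambda_1, \dots, \lambda_m$ for each $m \leq i$ forces $\lambda_m \equiv 0 \pmod m$, hence (combined over all $m$) ... hmm, I need $\lambda_i$ divisible by each $j \leq i$ simultaneously.

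So here is the refined plan for maximality. Fix $\lambda \in \mathscr{I}$ and an index $i$; I want $j \mid \lambda_i$ for every $j \leq i$. For each such $j$, consider the sub-partition obtained by deleting parts $\lambda_{j+1}, \dots, \lambda_{i-1}$ — that is, keep $\lambda_1, \dots, \lambda_j, \lambda_i$ (and delete everything after $\lambda_i$ too). This lands $\lambda_i$ into position $j+1$. Since this sub-partition lies in $\mathscr{I} \subseteq \s$, sequential congruence at index $j$ gives $\lambda_j \equiv \lambda_i \pmod j$; but from the truncation keeping only $\lambda_1,\dots,\lambda_j$ we already have $j \mid \lambda_j$, so $j \mid \lambda_i$. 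Ranging $j$ over $1, \dots, i$ yields that $\lambda_i$ is divisible by every integer up to $i$, hence $\lambda \in \s(\mathcal{A})$ by Lemma \ref{s(a)_description_lemma}. The main obstacle I anticipate is getting the index bookkeeping exactly right — ensuring the deletion places the target part at precisely the index whose congruence condition extracts the divisibility I want, and confirming that the resulting sub-partition is genuinely in $\s$ via the hypothesis $\mathscr{I} \subseteq \s$. Everything else is routine once Lemma \ref{s(a)_description_lemma} is invoked.
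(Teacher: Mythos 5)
Your proof is correct and takes essentially the same approach as the paper: both arguments establish the ideal property by noting that deleting parts only decreases indices, and both obtain maximality by deleting parts so that sequential congruence forces the divisibility $j \mid \lambda_i$ (the paper deletes the prefix so that $\lambda_i$ becomes the \emph{last} part at index $j$ and invokes condition (2) by contradiction, whereas you keep the prefix, delete a middle block, and combine conditions (1) and (2) of two sub-partitions --- a cosmetic difference). The only omission is the one-line check that $\s(\mathcal{A}) \subseteq \s$, which the proposition implicitly asserts and the paper proves first; it is immediate from Lemma \ref{s(a)_description_lemma}, since $i$ divides both $\lambda_i$ and $\lambda_{i+1}$, giving $\lambda_i \equiv \lambda_{i+1} \equiv 0 \pmod{i}$ for every $i$.
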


\begin{proof}
Let $\lambda \in \s(\mathcal{A})$. By Lemma \ref{s(a)_description_lemma}, we have that $\lambda_i\equiv 0 \pmod j$ for all $j\leq i$. Thus, for all $k$ we have  $\lambda_k \equiv \lambda_{k + 1} \equiv 0 \pmod k$, and hence $\lambda$ is sequentially congruent and $\s(\mathcal{A})\subseteq \s$. 

Now consider removing any one part from $\lambda$. Each remaining part is either going to stay at the same index or its index will decrease by one. Since each part is divisible by \textit{every} index less than or equal to its initial index, each part will still be congruent to $0$ modulo its own index and every smaller index. In other words, the resulting partition is still in $\s(\mathcal{A})$.

Now we need to show that $\s(\mathcal{A})$ contains all possible partition ideals of sequentially congruent partitions. Assume by way of contradiction that there exists some $\lambda \notin \s(\mathcal{A})$ which is in a partition ideal of sequentially congruent partitions. Because $\lambda \notin \s(\mathcal{A})$, there is at least one part $\lambda_i$ such that there exists some index $k\leq i$ which does not divide $\lambda_i$. Because $\lambda$ is in a partition ideal of sequentially congruent partitions, we can remove all of the parts after $\lambda_i$, and the resulting partition will still be in this partition ideal. We can also remove the first $i - k$ parts of the partition, so that $\lambda_i$ ends up at index $k$, and this new partition is still in the partition ideal. However, we know that $k \nmid \lambda_i$, so $\lambda_i \not \equiv 0 \pmod k$, and therefore $\lambda$ is not a sequentially congruent partition.
\end{proof}

In the literature, interest in partition ideals has typically been focused on either order 1 partition ideals or linked partition ideals. These types of ideals permit the application of some extremely useful generating function results due to Andrews (see \cite{andrews} for a summary of these results). We will categorize all possible order 1 partition subideals of $\s(\mathcal{A})$ and show that no subideals of $\s(\mathcal{A})$ are linked. We will conclude this section with a discussion of a new class of partition ideals which have infinite order and their potential interest as mathematical objects. We now recall the definition of the order of a partition ideal.

\begin{definition}
A partition ideal $\mathscr{I}$ has \emph{order} $k$ if $k$ is the least positive integer such that for all $\lambda=\langle1^{f_1}, 2^{f_2}, 3^{f_3},\dots\rangle\notin\mathscr{I}$, there exists $m$ such that $\lambda '=\big\langle 1^{f_1 '}, 2^{f_2 '}, 3^{f_3 '},\dots\big\rangle \notin\mathscr{I}$, where 
$$f_i ' = 
\begin{dcases}
      f_i & \text{for }i=m,m+1,\dots,m+k-1, \\
      0   & \text{otherwise}.
\end{dcases}$$
\end{definition}
We can think of the order of a partition ideal as the maximum number of consecutive integers' frequencies we must consider to see that a given partition is not an element of $\mathscr{I}$.

To determine the order of $\s(\mathcal{A})$, observe that $\lambda=(m,1)\notin\s(\mathcal{A})$ while $(m)\in\s(\mathcal{A})$ and $(1)\in\s(\mathcal{A})$ for all $m$. Letting $m$ become arbitrarily large, we have to consider an arbitrarily large number of frequencies of $\lambda$ to obtain $\lambda '\notin\s(\mathcal{A})$. Therefore $\s(\mathcal{A})$ has no well defined order, and so we say that $\s(\mathcal{A})$ has \emph{infinite order}. We can, however, obtain order 1 subideals of $\s(\mathcal{A})$, to which we can apply Andrews' results to get generating functions for each such subideal.  Note that each order 1 subideal $C\subseteq\s(\mathcal{A})$ is determined by a single partition $\lambda\in C$, and all other partitions in $C$ can be obtained by removing one or more parts from $\lambda$. Any such partition ideal is necessarily finite and therefore of limited interest, since the corresponding generating functions devolve into products of finite geometric series.  While $\s(\mathcal{A})$ has order 1 subideals, albeit finite ones, we will show that there are no subideals of $\s(\mathcal{A})$ that are linked. Nevertheless, the discussion surrounding $\s(\mathcal{A})$ and linked partition ideals is a somewhat more interesting one.

We now recall some definitions and examples required to understand the concept of a linked partition ideal.  As in \cite{andrews1974,andrews,CL}, we define the bijection $\varphi$ by $\varphi((\lambda_1, \lambda_2, \dots, \lambda_r)) =  (\lambda_1+1, \lambda_2+1, \dots, \lambda_r+1)$. Next, for a partition ideal $\mathscr{I}$, define $\mathscr{I}^{(m)}=\{\lambda=\langle1^{f_1}, 2^{f_2}, \dots\rangle\in\mathscr{I}\mid f_1=f_2=\dots=f_m=0\}$.

\begin{definition}
A partition ideal $\mathscr{I}$ has \emph{modulus} $m$ if $m$ is a positive integer such that $\varphi^m\mathscr{I}=\mathscr{I}^{(m)}$.
\end{definition}
Note that $\mathscr{I}$ can have more than one modulus. In fact, if $\mathscr{I}$ has modulus $m$, then any multiple of $m$ is also a modulus of $\mathscr{I}$. In order for a partition ideal to be linked, it must have a modulus. We see immediately that $\s(\mathcal{A})$ has no modulus, since for any finite $m$, there is a partition $\lambda\in\s(\mathcal{A})$ of length $m+1$ such that adding $m$ to the last part makes that part no longer a multiple of $m+1$. For example, let $m=4$ and consider $\lambda=(60,60,60,60,60)\in\s(\mathcal{A})$. Note that $(64,64,64,64,64)\notin\s(\mathcal{A})$ since $5 \nmid 64$. Additionally, any order 1 subideal of $\s(\mathcal{A})$ does not have a modulus, since it has a largest allowable part. If $k$ is the largest allowable part of an order 1 partition ideal $I\subseteq\s(\mathcal{A})$, then for any $m$, note that $(k+m)\notin I$, and thus $m$ is not a modulus for $I$.

We can define a subideal of $\s(\mathcal{A})$ with a modulus by limiting the maximum length of the partitions in that subideal. We define $C=\{\lambda\in\s(\mathcal{A})\mid \ell(\lambda)\leq r\}$ so that $C$ is an infinite set. The order of $C$ is again infinite by the same reasoning we used to show that $\s(\mathcal{A})$ has infinite order. To see that $C$ has a modulus, note that $\varphi^mC$ maps surjectively into $C^{(m)}$ when $m$ is a multiple of $\lcm(1,\dots,r)$, meaning $C$ has modulus $m=\lcm(1,\dots,r)$.  In fact, a subideal $C\subseteq\s(\mathcal{A})$ has a modulus if and only if the maximum length of the partitions in $C$ is bounded.

We now define a critically important set in any partition ideal with modulus $m$, and then we state Lemma 8.9 from Andrews \cite{andrews}, which is required to define linked partition ideals. Following the lemma, we give one more useful definition from Chern and Li \cite{CL}, and then we will be sufficiently equipped to define linked partition ideals.

\begin{definition}
For each partition ideal $\mathscr{I}$ with modulus $m$, define
$$L_\mathscr{I}:=\big\{\lambda=\big\langle1^{f_1},2^{f_2},\dots\big\rangle\in\mathscr{I}\mid f_i=0 \text{ for }i>m\big\}.$$
\end{definition}
For two partitions $\lambda, \gamma\in\mathscr{I}$, define $\lambda\oplus\gamma$ to be the partition of length $\ell(\lambda)+\ell(\gamma)$ formed by combining all parts of the two partitions in weakly decreasing order.  Note that $\oplus$ is a different additive operation than $\star$, which we defined in Section \ref{section_intro}.

\begin{lemma}[Andrews, Lemma 8.9]\label{the_Andrews_lemma}
Let $\mathscr{I}$ be a partition ideal with modulus $m$. For each $\lambda\in\mathscr{I}$, we uniquely have
$$\lambda= \pi_1\oplus\big(\varphi^m\pi_2\big)\oplus\big(\varphi^{2m}\pi_3\big)\oplus\dots,$$
where $\pi_i\in L_\mathscr{I}$.
\end{lemma}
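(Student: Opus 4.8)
The plan is to establish existence and uniqueness separately, and the single fact doing all the work is a descent step: \emph{if $\rho\in\mathscr{I}$ has every part exceeding $m$, then there is a partition $\nu\in\mathscr{I}$ with $\varphi^m\nu=\rho$.} This is immediate from the modulus hypothesis, since such a $\rho$ lies in $\mathscr{I}^{(m)}=\varphi^m\mathscr{I}$, and the corresponding $\nu=\varphi^{-m}\rho$ is a genuine partition precisely because each part of $\rho$ is at least $m+1$. I expect this to be the only place where the modulus hypothesis (as opposed to the bare partition-ideal property) is used, and hence the conceptual heart of the argument; everything else is bookkeeping about which interval each part lands in.

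For existence I would induct on $|\lambda|$, the base case being the empty partition, for which all $\pi_i$ are empty. Given $\lambda\in\mathscr{I}$, let $\pi_1$ be the partition consisting of exactly those parts of $\lambda$ that are at most $m$, and let $\rho$ be the partition of the remaining parts. Since $\mathscr{I}$ is a partition ideal, deleting parts keeps us inside $\mathscr{I}$, so both $\pi_1$ and $\rho$ lie in $\mathscr{I}$; moreover $\pi_1\in L_{\mathscr{I}}$ by construction, and every part of $\rho$ exceeds $m$. The descent step then produces $\nu\in\mathscr{I}$ with $\varphi^m\nu=\rho$, and $|\nu|=|\rho|-m\,\ell(\rho)<|\lambda|$ whenever $\lambda$ is nonempty (either $\pi_1$ is nonempty so $|\rho|<|\lambda|$, or $\rho=\lambda$ and we subtract $m\geq 1$ from each of its parts). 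By the inductive hypothesis $\nu=\pi_2\oplus(\varphi^m\pi_3)\oplus\cdots$ with each $\pi_i\in L_{\mathscr{I}}$, the sum terminating because $|\nu|$ is finite. Applying $\varphi^m$, which commutes with $\oplus$ since adding $m$ to every part preserves the weakly decreasing merge order, gives $\rho=(\varphi^m\pi_2)\oplus(\varphi^{2m}\pi_3)\oplus\cdots$, and therefore $\lambda=\pi_1\oplus\rho$ has the claimed form.

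For uniqueness I would use a disjointness-of-intervals observation. In any representation $\lambda=\pi_1\oplus(\varphi^m\pi_2)\oplus(\varphi^{2m}\pi_3)\oplus\cdots$ with $\pi_j\in L_{\mathscr{I}}$, every part of $\pi_j$ lies in $\{1,\dots,m\}$ by the definition of $L_{\mathscr{I}}$, so every part contributed by the term $\varphi^{(j-1)m}\pi_j$ lies in $\{(j-1)m+1,\dots,jm\}$. These blocks are pairwise disjoint and exhaust the positive integers, so the multiset of parts of $\lambda$ falling in $\{(j-1)m+1,\dots,jm\}$ is exactly the multiset of parts of $\varphi^{(j-1)m}\pi_j$; subtracting $(j-1)m$ from each recovers $\pi_j$ and shows it is determined by $\lambda$ alone. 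Hence the decomposition is forced, which completes the proof.
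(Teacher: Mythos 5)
This is one place where there is no in-paper argument to compare you against: the paper states this lemma as a quotation of Lemma 8.9 from Andrews' \emph{The Theory of Partitions} and gives no proof, using it only as input to the definition of linked partition ideals and to Theorem \ref{linked_theorem}. Judged on its own, your proof is correct and complete. The descent step is exactly the right use of the modulus hypothesis: a partition $\rho\in\mathscr{I}$ all of whose parts exceed $m$ satisfies $\rho\in\mathscr{I}^{(m)}=\varphi^m\mathscr{I}$, hence $\rho=\varphi^m\nu$ for some $\nu\in\mathscr{I}$. Your existence argument (split off $\tail_m(\lambda)$, which lies in $L_\mathscr{I}$ by the ideal property; apply descent to the remaining parts; induct on size, using that $\varphi^m$ distributes over $\oplus$) is sound, and the induction is well-founded in all cases, including $\pi_1$ empty or $\rho$ empty. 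Your uniqueness argument is also airtight and correctly uses nothing beyond the definitions: the blocks $\{(j-1)m+1,\dots,jm\}$ are disjoint and cover the positive integers, so each $\pi_j$ is recovered from $\lambda$ by restricting to the $j$th block and shifting down. The only alternative worth mentioning is the non-inductive version of existence: define $\pi_j$ directly as the parts of $\lambda$ in the $j$th block shifted down by $(j-1)m$; membership $\pi_j\in\mathscr{I}$ then follows because $(j-1)m$ is also a modulus of $\mathscr{I}$ (a fact the paper records immediately after defining modulus). Your iterated single-step descent proves the same thing while invoking only the modulus $m$ itself, which is arguably cleaner; structurally the two arguments are the same.
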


\begin{definition}
For any partition $\lambda\in\p$, its \emph{$m$-tail} $\tail_m(\lambda)$ is the collection of parts of $\lambda$ which are at most $m$.

For example,
$$\tail_2((3,3,2,1,1,1))=(2,1,1,1).$$
\end{definition}

Finally, we define a linked partition ideal.

\begin{definition}
\label{linked_definition}
A partition ideal $\mathscr{I}$ is a \emph{linked partition ideal} if 
\begin{enumerate}
    \item $\mathscr{I}$ has a modulus, $m$;
    \item the set $L_\mathscr{I}$ corresponding to $m$ is finite;
    \item for each $\pi\in L_\mathscr{I}$ there corresponds a minimal subset $\mathscr{L}_\mathscr{I}(\pi) \subseteq L_\mathscr{I}$ and a positive integer $l(\pi)$ such that for any $\lambda\in\p$, we have that $\lambda\in\mathscr{I}$ with $\tail_m(\lambda)=\pi$ if and only if we can find a partition $\tilde{\pi}$ with $\tail_m(\tilde{\pi})\in\mathscr{L}_\mathscr{I}(\pi)$ such that
    $$\lambda=\pi\oplus\big(\varphi^{l(\pi)m}\tilde{\pi}\big).$$
    The subset $\mathscr{L}_\mathscr{I}(\pi)$ is called the \emph{linking set of $\pi$}, and the integer $l(\pi)$ is called the \emph{span of $\pi$}.
\end{enumerate}
\end{definition}
Two important observations are that the empty partition (which we will refer to as $\pi_0$) is in the linking set of every element of $L_\mathscr{I}$, and that the linking set of $\pi_0$ is $\mathscr{L}_\mathscr{I}(\pi_0)=L_\mathscr{I}$.

The following two examples can also be found in \cite{CL}.

\begin{example}
\label{linkedEx1}
Let $\mathscr{D}$ denote the set of partitions into distinct parts. Note that $\mathscr{D}$ is a partition ideal with modulus 1, $L_\mathscr{D}=\{\pi_0, (1)\}$, and
\begin{align*}
    \mathscr{L}_\mathscr{D}(\pi_0)&=L_\mathscr{D}, \;\;\;\;&l(\pi_0)=1,\\
    \mathscr{L}_\mathscr{D}((1))&=L_\mathscr{D}, \;\;\;\;&l((1))=1.
\end{align*}
\end{example}

\begin{example}
\label{linkedEx2}
Consider 
$\mathscr{R}$, the set of Rogers--Ramanujan partitions, where adjacent parts differ by at least two. Note that $\mathscr{R}$ is a partition ideal with modulus 1 and that $L_\mathscr{R}=\{\pi_0, (1)\}.$ Then we have
\begin{align*}
    \mathscr{L}_\mathscr{R}(\pi_0)&=L_\mathscr{R}, \;\;\;\;&l(\pi_0)=1,\\
    \mathscr{L}_\mathscr{R}((1))&=L_\mathscr{R}, \;\;\;\;  &l((1))=2.
\end{align*}
Alternatively, we may consider $\mathscr{R}$ to be a partition ideal with modulus 2 and $L_\mathscr{R}=\{\pi_0, (1), (2)\}.$ In this case we have
\begin{align*}
    \mathscr{L}_\mathscr{R}(\pi_0)&=L_\mathscr{R}, \;\;\;\;&l(\pi_0)=1,\\
    \mathscr{L}_\mathscr{R}((1))&=L_\mathscr{R}, \;\;\;\;  &l((1))=1,\\
    \shortintertext{and}\\
    \mathscr{L}_\mathscr{R}((2))&=\{\pi_0, (2)\},\;\;\;\;  &l((2))=1.
\end{align*}
\end{example}

We have shown that we can construct ideals $C\subseteq\s(\mathcal{A})$ with modulus $m$ where $m=\lcm (1, \dots, r)$ by choosing all elements of $\s(\mathcal{A})$ whose length is no longer than $r$ so that condition (1) from Definition \ref{linked_definition} is satisfied. Observe that such a subset $C$ also satisfies condition (2), since the length of the partitions in $C$ must be bounded in order for $C$ to have a modulus. However, we will show that condition (3) does not hold for any such subset $C$.

\begin{proposition}\label{subideals_not_linked}
Any subideal $C\subseteq\s(\mathcal{A})$ is not a linked partition ideal.
\end{proposition}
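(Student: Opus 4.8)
The plan is to assume for contradiction that some subideal $C\subseteq\s(\mathcal{A})$ with a nonempty partition is linked, and to show that condition (3) of Definition \ref{linked_definition} then forces $C$ to contain partitions of arbitrarily large length. This contradicts the fact, established earlier, that a subideal of $\s(\mathcal{A})$ has a modulus if and only if the maximum length of its partitions is bounded. Notably this route uses only the bounded-length property, not the finer divisibility structure of $\s(\mathcal{A})$.

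First I would record the reductions. Since $C$ is linked it has a modulus $m$, so by the length characterization there is an $r$ with $\ell(\lambda)\leq r$ for all $\lambda\in C$. The identity $\varphi^m C=C^{(m)}\subseteq C$ gives the crucial closure property that $\varphi^m$ maps $C$ into itself, and since $\varphi^m$ raises every part by $m$, the shifted partition $\varphi^m\mu$ has empty $m$-tail for any $\mu\in C$. Finally, if $L_C=\{\pi_0\}$ then by Lemma \ref{the_Andrews_lemma} every element of $C$ is an $\oplus$-combination of copies of $\pi_0$, forcing $C=\{\pi_0\}$, the degenerate case excluded at the outset; so I may fix a nonempty $\pi^*\in L_C$, and set $a:=\ell(\pi^*)\geq1$ and $l^*:=l(\pi^*)$.

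The heart of the argument is an inductive stacking exploiting the backward direction of condition (3) together with the fact that $\pi_0\in\mathscr{L}_C(\pi^*)$. Put $\mu_1:=\pi^*\in C$, and given $\mu_k\in C$ set $\nu_k:=\varphi^m\mu_k\in C$, so that $\tail_m(\nu_k)=\pi_0\in\mathscr{L}_C(\pi^*)$. Then $\mu_{k+1}:=\pi^*\oplus\varphi^{l^*m}\nu_k$ meets the hypotheses of the backward direction of condition (3) with tail $\pi^*$ and continuation $\nu_k$, whence $\mu_{k+1}\in C$. Because every part of $\varphi^{l^*m}\nu_k$ exceeds $m$, the combination has $m$-tail exactly $\pi^*$, and since $\oplus$ concatenates parts we get $\ell(\mu_{k+1})=\ell(\pi^*)+\ell(\nu_k)=a+\ell(\mu_k)$. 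Thus $\ell(\mu_k)=ka$ grows without bound, contradicting $\ell(\mu_k)\leq r$.

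I expect the main obstacle to be the careful bookkeeping around condition (3): one must check that $\nu_k$ is a legitimate continuation (it lies in $C$ and its $m$-tail is $\pi_0$, which belongs to every linking set), that the $\oplus$-combination has $m$-tail genuinely equal to $\pi^*$ rather than some larger partition, and that lengths add under $\oplus$ rather than max out as they would under $\star$. The only other point to dispatch is the trivial ideal $C=\{\pi_0\}$, which contains no nonempty partition and must be ruled out separately; with that exclusion the stacking argument applies to every subideal that has a modulus, and hence to every candidate linked subideal, completing the proof.
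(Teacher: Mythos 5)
Your proof is correct, and it takes a genuinely different route from the paper's. The paper argues concretely: taking the modulus to be $m=\lcm(1,\dots,r)$, it notes that $(2)\in L_C$ and $(m+2,2)\in C$, uses the forward direction of condition (3) of Definition \ref{linked_definition} to pin down $l((2))=1$ and $(2)\in\mathscr{L}_C((2))$, and then applies the backward direction once to force $(2m+2,m+2,2)\in C$, which is absurd because $3\nmid 2$, so this partition is not even sequentially congruent. You instead run the backward direction inductively: from any nonempty $\pi^*\in L_C$, using only the general facts that $\pi_0\in\mathscr{L}_C(\pi^*)$ and $\varphi^m C=C^{(m)}\subseteq C$, you manufacture members of $C$ of length $ka$ for every $k$, contradicting the bounded-length consequence of having a modulus; this is the same mechanism the paper itself uses for $\mathscr{N}$ in Example \ref{infinite_order_ex3}, transported to subideals of $\s(\mathcal{A})$. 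The trade-offs are real: your argument covers every subideal containing a nonempty partition, whereas the paper's proof, as written, needs $(2)$ and $(m+2,2)$ to actually lie in $C$ and needs the modulus to equal $\lcm(1,\dots,r)$ --- guarantees that hold for the full length-bounded subideals $\{\lambda\in\s(\mathcal{A}):\ell(\lambda)\leq r\}$ but not for an arbitrary subideal --- and you never have to identify any modulus, span, or linking set beyond the membership $\pi_0\in\mathscr{L}_C(\pi^*)$. In exchange, the divisibility structure of $\s(\mathcal{A})$ enters your proof only through the asserted equivalence ``modulus if and only if bounded length,'' which the paper states but never proves; for completeness you should verify the direction you use (if $\lambda,\varphi^m\lambda\in\s(\mathcal{A})$ then $\lcm(1,\dots,i)\mid m$ for every $i\leq\ell(\lambda)$, which bounds $\ell(\lambda)$), whereas the paper's contradiction invokes divisibility directly. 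Two smaller points: your care in choosing $\tilde{\pi}=\nu_k\in C$ (rather than an arbitrary partition with empty $m$-tail) makes the stacking valid under the intended reading of condition (3), in which the continuation must itself lie in the ideal; and both you and the paper leave the degenerate ideal $\{\pi_0\}$ untreated --- whether it counts as linked is an edge case of Definition \ref{linked_definition} rather than a defect of your argument, so flagging and excluding it, as you do, is reasonable.
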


\begin{proof}
Let $C\subseteq\s(\mathcal{A})$ be a subideal, and suppose the length of the partitions in $C$ is bounded by $r$ for some $r\geq2$, so that C has modulus $m=\lcm(1,\dots,r)$. Since $2\leq\lcm(1,\dots, r)$, we have $(2)\in L_C$. Note that $(\lcm(1,\dots,r)+2, 2)=(m+2,2)\in C$, and thus we have $(2)\in\mathscr{L}_C((2))$ and $l((2))=1$. Therefore $(2m+2, m+2, 2)\in C$, but $2\not\equiv 0 \pmod 3$, which contradicts the fact that $C$ is a subset of $\s(\mathcal{A})\subseteq\s$. Thus no such subideal $C$ is linked.
\end{proof}

Since $\s(\mathcal{A})$ and its subideals are not linked, we cannot apply the generating function results of Andrews to sequentially congruent partitions.  However, there has been no other research so far on partition ideals of infinite order, so here we describe several more partition ideals of infinite order and ask whether they are linked.

\begin{example}
\label{infinite_order_ex1}
Let $\mathscr{R}'$ denote the set of all partitions with no parts below their Durfee square. A similar argument to the one for $\s(\mathcal{A})$ above verifies that $\mathscr{R}'$ is an infinite order partition ideal. Additionally, $\mathscr{R}'$ is not linked by a similar argument to that in the proof of Proposition \ref{subideals_not_linked}. By a simple Young diagram transformation, we know that $\mathscr{R}'$ is in bijection with the set of Rogers--Ramanujan partitions $\mathscr{R}$ from Example \ref{linkedEx2}. It is straightforward to confirm that $\mathscr{R}$ has order 2, which leads to the interesting observation that infinite order partition ideals can be in bijection with finite order partition ideals.
Furthermore, note that $\s(\mathcal{A})\subseteq\s\subseteq\mathscr{R}'$, since for any partition $\phi=(\phi_1, \phi_2, \dots,\phi_r)\in\s$, we have that $\phi_r=kr$ for some $k\geq1$. Thus $\phi_r\geq \ell(\phi)$ and therefore $\phi\in\mathscr{R}'$.
\end{example}

\begin{example}
\label{infinite_order_ex2}
Let $\mathscr{A}$ be the set of all partitions $(\lambda_1,\dots,\lambda_r)$ that satisfy $\lambda_{r-i} - \lambda_{r-i+1} \geq i$ for all $1\leq i\leq r-1$. Consider $\lambda=(k+1,k,1)$. Note that $\lambda_1-\lambda_2=1\not\geq 2$ and thus $\lambda\notin\mathscr{A}$. Letting $k$ be arbitrarily large, we see that $\mathscr{A}$ has no finite order. Notice that $\mathscr{A}$ has modulus 1, and $L_\mathscr{A}=\{\pi_0, (1)\}$. However, the subpartition $(1)$ does not have a consistent span, since $(2,1)\in\mathscr{A}$ but $(3,2,1)\notin\mathscr{A}$. Therefore $\mathscr{A}$ is not linked.
\end{example}

\begin{example}
\label{infinite_order_ex3}
Let $\mathscr{N}$ be the set of all partitions with length at most $n$. This is an infinite order partition ideal with modulus 1, and the set $L_\mathscr{N}$ is finite with cardinality $n+1$. However, $\mathscr{N}$ is not linked, since we can construct partitions with length greater than $n$ using the representation described in condition (3) of Definition \ref{linked_definition}.
\end{example}

\begin{example}
\label{infinite_order_ex4}
Let $\mathscr{P}$ denote the set of all partitions whose parts have the same parity. Notice that $\mathscr{P}$ is also an infinite order partition ideal with modulus 1. This ideal is not linked because the set $L_\mathscr{P}$ is not finite.  These parity partitions can be generalized to sets of partitions whose parts are all the same modulo $k$ for some $k\geq 2$. Each such set is a partition ideal with infinite order which is not linked.
\end{example}

After finding that none of these infinite order partition ideals are linked, we prove the following.

\begin{theorem}
\label{linked_theorem}
Any linked partition ideal has finite order.
\end{theorem}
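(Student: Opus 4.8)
The plan is to reduce the statement to a claim about the ``spread'' of minimal forbidden partitions and then to read that spread off from the linking data. Call a partition $\mu\notin\mathscr{I}$ \emph{minimal forbidden} if every partition obtained from $\mu$ by deleting a part lies in $\mathscr{I}$, and define the \emph{spread} of $\mu$ to be $\max(\mu)-\min(\mu)$. Since $\mathscr{I}$ is downward closed and any fixed partition has only finitely many subpartitions, every $\lambda\notin\mathscr{I}$ contains a minimal forbidden partition (pick a minimal forbidden element among the finitely many subpartitions of $\lambda$). I claim that if every minimal forbidden partition has spread less than $W$, then the property in the definition of order holds with $k=W$. Indeed, given $\lambda\notin\mathscr{I}$, choose a minimal forbidden $\mu\subseteq\lambda$ with all parts in an interval $[a,a+W-1]$; retaining only the frequencies $f_a,\dots,f_{a+W-1}$ of $\lambda$ produces $\lambda'$ with $\mu\subseteq\lambda'$, so $\lambda'\notin\mathscr{I}$ by downward closure, which is exactly the witness required (taking the definition's index to be $a$ and its window size to be $W$). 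Hence it suffices to bound the spread of minimal forbidden partitions by a constant depending only on $\mathscr{I}$.

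To do this, fix a modulus $m$ and set $L:=\max_{\pi\in L_\mathscr{I}}l(\pi)$, which is finite because $L_\mathscr{I}$ is finite. Using Lemma \ref{the_Andrews_lemma}, I decompose any partition into consecutive length-$m$ \emph{blocks}: the $i$th block $\pi_i$ is the partition recorded by the frequencies $f_{(i-1)m+1},\dots,f_{im}$ shifted down to parts in $[1,m]$, so that $\lambda=\pi_1\oplus(\varphi^m\pi_2)\oplus(\varphi^{2m}\pi_3)\oplus\cdots$. The modulus relation $\varphi^m\mathscr{I}=\mathscr{I}^{(m)}$ shows that a block $\theta$ with parts at most $m$ satisfies $\theta\in\mathscr{I}$ (equivalently $\theta\in L_\mathscr{I}$) if and only if each shifted copy $\varphi^{jm}\theta$ does. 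Consequently, if some block $\pi_i$ of a minimal forbidden $\mu$ fails to lie in $L_\mathscr{I}$, then the chunk $\varphi^{(i-1)m}\pi_i$ is a forbidden subpartition of $\mu$, and minimality forces $\mu=\varphi^{(i-1)m}\pi_i$, a single block of spread at most $m-1$. So we may assume every block of $\mu$ lies in $L_\mathscr{I}$.

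In this main case membership is governed block-locally. Unwinding condition (3) of Definition \ref{linked_definition} recursively shows that, for a partition all of whose blocks lie in $L_\mathscr{I}$, a consecutive pair of nonempty blocks consisting of $\pi$ and the block $\rho$ lying $g\geq1$ positions later forms an \emph{admissible} transition exactly when $g>l(\pi)$, or when $g=l(\pi)$ and $\rho\in\mathscr{L}_\mathscr{I}(\pi)$. The case $g>l(\pi)$ is always admissible because $\pi_0$ lies in every linking set and $\mathscr{L}_\mathscr{I}(\pi_0)=L_\mathscr{I}$, so one pads with empty blocks; the case $g<l(\pi)$ is never admissible because the span of $\pi$ forces the intervening positions to be empty. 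Such a partition lies in $\mathscr{I}$ if and only if every consecutive pair of nonempty blocks is admissible, and a \emph{non}-admissible transition necessarily has $g\leq l(\pi)\leq L$. Now if $\mu$ is minimal forbidden with all blocks in $L_\mathscr{I}$, then some consecutive nonempty pair $\pi,\rho$ at distance $g\leq L$ is non-admissible; deleting from $\mu$ all parts outside these two blocks gives a subpartition $\nu$ whose only nonempty blocks are $\pi$ and $\rho$ at the same distance $g$, and since admissibility depends only on $\pi,\rho,g$, this transition is still non-admissible, so $\nu\notin\mathscr{I}$. By minimality $\nu=\mu$, so $\mu$ has exactly two nonempty blocks, at distance at most $L$; its parts all lie in an interval of length $(g+1)m\leq(L+1)m$, giving spread less than $(L+1)m$. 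Combined with the single-block case, every minimal forbidden partition has spread less than $(L+1)m$, so $\mathscr{I}$ has finite order, at most $(L+1)m$.

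The main obstacle is the block-local characterization of membership used in the third paragraph: condition (3) is a recursive biconditional involving the exact shift $l(\pi)m$, so I must verify carefully that an overshooting gap $g>l(\pi)$ can always be realized by inserting empty blocks (using $\pi_0\in\mathscr{L}_\mathscr{I}(\pi)$ and $\mathscr{L}_\mathscr{I}(\pi_0)=L_\mathscr{I}$), that an undershooting gap $g<l(\pi)$ is forced to be forbidden because $\pi$'s span mandates empty positions where $\rho$ sits, and that these constraints are genuinely local, so that deleting the remaining blocks preserves the single offending transition. Making this translation precise and consistent with the decomposition of Lemma \ref{the_Andrews_lemma} is the technical heart of the argument; once it is in place, the minimality step and the spread bound follow immediately.
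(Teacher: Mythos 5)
Your proof is correct and follows essentially the same route as the paper: both decompose a partition into length-$m$ blocks in the spirit of Lemma \ref{the_Andrews_lemma}, use condition (3) of Definition \ref{linked_definition} to see that membership is governed by the transition between each nonempty block and the next one (gap exceeding the span, or equal to it with the next block in the linking set), and then localize any violation to a window of at most $\left(\max_{\pi \in L_\mathscr{I}} l(\pi) + 1\right)m$ consecutive parts, yielding the identical order bound. Your packaging via minimal forbidden partitions and explicit ``admissible transitions'' merely makes explicit the locality argument that the paper's case analysis uses implicitly, and both arguments rest on the same tacit assumptions, namely that $l(\pi_0) = 1$ and that the partition $\tilde{\pi}$ in condition (3) must itself lie in $\mathscr{I}$ when the condition is unwound recursively.
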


\begin{proof}
Suppose $C$ is a linked partition ideal and we have a partition $\lambda\notin C$. Then there is no representation of $\lambda$ as in Lemma \ref{the_Andrews_lemma} that satisfies the requirements of Definition \ref{linked_definition}. If $\tail_m(\lambda)\notin L_C$, then we choose
$$\lambda '=\tail_m(\lambda)\oplus \varphi^m\pi_0\oplus\varphi^{2m}\pi_0\oplus\cdots\notin C$$
so that the frequencies of the first $m$ natural numbers in $\lambda'$ match those of $\lambda$ and the rest are 0. If $\tail_m(\lambda)\in L_C$, then we have a representation of the form 
$$\lambda= \pi_1\oplus \varphi^m \pi_2 \oplus \cdots \oplus \varphi^{m(b-1)}\pi_b,$$
where each $\varphi^{m(i-1)}\pi_i$ satisfies the required linking set properties up until $\varphi^{m(b-1)}\pi_b$. Notice that $\varphi^{m(b-1)}\tail_m\pi_b$ corresponds to $m$ frequencies of $\lambda$. If $\tail_m\pi_b\notin L_C$, then we choose 
$$\lambda ' =\pi_0\oplus \varphi^m\pi_0\oplus\cdots\oplus\varphi^{m(b-2)}\pi_0\oplus\varphi^{m(b-1)}\tail_m\pi_b\oplus\varphi^{mb}\pi_0\oplus\cdots \notin C,$$
where again $m$ consecutive frequencies of $\lambda'$ match those of $\lambda$ and the rest are 0. Otherwise, it must be that $\pi_b$ is the first $\pi_i$ where $\tail_m(\pi_b)\neq \pi_0$ after some nonempty partition $\pi_a$. If $\tail_m\pi_b\in \Ll_C(\pi_a)$, then 
$$\lambda= \pi_1\oplus \varphi^m \pi_2 \oplus \cdots \oplus\varphi^{m(a-1)}\pi_a\oplus\underbrace{\varphi^{ma}\pi_0\oplus\cdots\oplus\varphi^{m(b-2)}\pi_0}_{<l(\pi_a)-1\text{ times}} \oplus \varphi^{m(b-1)}\pi_b,$$
and we can choose 
\begin{align*}
    \lambda ' &=\pi_0\oplus\cdots\oplus\varphi^{m(a-2)}\pi_0\oplus\varphi^{m(a-1)}\pi_a\oplus\\
    &\hspace{1.5cm}\underbrace{\varphi^{ma}\pi_0\oplus\cdots\oplus\varphi^{m(b-2)}\pi_0}_{<l(\pi_a)-1\text{ times}}\oplus\varphi^{m(b-1)}\tail_m\pi_b\oplus\varphi^{mb}\pi_0\oplus\cdots \notin C,
\end{align*}
which corresponds to less than $(l(\pi_a)+1)m$ consecutive frequencies matching those of $\lambda$ and the rest equal to 0. Otherwise, we have that $\tail_m\pi_b\notin \Ll_C(\pi_a)$, and we can choose 
\begin{align*}
    \lambda ' &=\pi_0\oplus\cdots\oplus\varphi^{m(a-2)}\pi_0\oplus\varphi^{m(a-1)}\pi_a\oplus\\
    &\hspace{1.5cm}\underbrace{\varphi^{ma}\pi_0\oplus\cdots\oplus\varphi^{m(b-2)}\pi_0}_{\leq l(\pi_a)-1\text{ times}}\oplus\varphi^{m(b-1)}\tail_m\pi_b\oplus\varphi^{mb}\pi_0\oplus\cdots \notin C,
\end{align*}
which corresponds to at most $(l(\pi_a)+1)m$ consecutive frequencies matching those of $\lambda$ and the rest 0. Since $l(\pi_a)\leq \max(\{l(\pi_i):\pi_i\in L_C\})$, we have that $C$ has order at most $(\max\{l(\pi_i):\pi_i\in L_C\}+1)m$, and therefore $C$ has finite order.
\end{proof}

Notice that the partition ideals in Example \ref{infinite_order_ex4}, and the suggested generalizations thereof, can be thought of as unions of order 1 partition ideals whose intersections contain only the empty partition. For this reason, finding generating functions for such partition ideals is simple, even without Andrews' generating function results. For example, the generating function for $\mathscr{P}$ as defined in Example \ref{infinite_order_ex4} is simply
$$\sum_{n=0}^\infty p(\mathscr{P},n)q^n=\prod_{i=1}^\infty \frac{1}{1-q^{2n-1}}+\prod_{i=1}^\infty \frac{1}{1-q^{2n}}-1,$$
where we subtract 1 to avoid double counting the empty partition. It does not appear that Examples \ref{infinite_order_ex1}, \ref{infinite_order_ex2}, and \ref{infinite_order_ex3} are similarly composed of order 1 partition ideals.

Tweaking the definition of the order of a partition ideal provides further evidence that Examples \ref{infinite_order_ex1}, \ref{infinite_order_ex2}, \ref{infinite_order_ex3}, and \ref{infinite_order_ex4} fall into different classes.  For example, suppose we say that a partition ideal $\mathscr{I}$ has ``weak order" $k$ if $k$ is the least positive integer such that for all $\lambda=\Big\langle\lambda_1^{f_{\lambda_1}}, \lambda_2^{f_{\lambda_2}}, \lambda_3^{f_{\lambda_3}},\dots\Big\rangle\notin\mathscr{I}$, there exists $m$ such that $\lambda '=\Big\langle \lambda_1^{f_{\lambda_1}'}, \lambda_2^{f_{\lambda_2}'}, \lambda_3^{f_{\lambda_3}'},\dots\Big\rangle \notin\mathscr{I}$, where 
$$f_{\lambda_i}' = 
\begin{dcases}
      f_{\lambda_i} & \text{for }i=m,m+1,\dots,m+k-1, \\
      0   & \text{otherwise}.
\end{dcases}$$
In other words, weak order omits parts whose frequencies are zero so that the $k$ consecutive parts need not be consecutive positive integers.  Under this new definition, the ideals $\mathscr{R}'$ and $\mathscr{A}$ from Examples \ref{infinite_order_ex1} and \ref{infinite_order_ex2} still have infinite weak order, while $\mathscr{N}$ and $\mathscr{P}$ from Examples \ref{infinite_order_ex3} and \ref{infinite_order_ex4} have weak order $n+1$ and $2$, respectively.

To provide even further evidence of distinctions among these partition ideals, we can tweak the definition of a linked partition ideal to allow for the span of $\pi_0$ to be 1 before a nonempty partition appears in the representation given in Lemma \ref{the_Andrews_lemma}, and to change after such a partition appears in the representation.
If we call partitions satisfying this definition ``quasi-linked", note that the ideals discussed in Examples \ref{infinite_order_ex1}, \ref{infinite_order_ex2}, and \ref{infinite_order_ex3} are not quasi-linked.
Furthermore, we see that $\mathscr{P}$ from Example \ref{infinite_order_ex4} and its suggested generalizations are also not quasi-linked, as $L_\mathscr{P}$ is not finite.
However, this issue can be remedied by taking $\mathscr{P}'\subseteq\mathscr{P}$ to be the set of all partitions in $\mathscr{P}$ with distinct parts (or parts occurring no more than $q$ times for some $q\in\N$).
In this case, we do have that $\mathscr{P}'$ and its generalizations are quasi-linked, since $l(\pi_0)$ can be defined as 1 before a nonempty partition occurs in the Lemma \ref{the_Andrews_lemma} representation of a partition in $\mathscr{P}'$, and 2 (or $k$ in the generalizations) after a nonempty partition occurs in the representation.

These observations, paired with the difficulty of defining properties intrinsic to infinite order partition ideals, have led us to think of infinite order as the lack of known exploitable properties relating to partition ideals, rather than a property worthy of research in its own right.
However, the various characteristics of the examples given above do suggest the potential for further categorization of partition ideals.
Such categorization may yield new results, such as ways to find generating functions for partition ideals that are unions of order 1 partition ideals, or opportunities to adjust certain key definitions relevant to partition ideals so as to be more broadly applicable.

\section*{Acknowledgments}
The authors were supported by a National Science Foundation grant (DMS-2149921).

\end{document}